\documentclass{article}
\usepackage[a4paper,margin=25mm]{geometry}

\usepackage{amsfonts}
\usepackage{amsmath}
\usepackage{amsrefs}
\usepackage{indentfirst}

\DeclareMathOperator{\Ass}{Ass}
\DeclareMathOperator{\Spec}{Spec}
\DeclareMathOperator{\depth}{depth}
\DeclareMathOperator{\ann}{ann}
\DeclareMathOperator{\Supp}{Supp}
\DeclareMathOperator{\ara}{ara}
\DeclareMathOperator{\divisor}{div}
\DeclareMathOperator{\hight}{ht}
\DeclareMathOperator{\Hom}{Hom}
\DeclareMathOperator{\ch}{char}

\usepackage{amsthm}
\theoremstyle{definition}
\newtheorem{definition}{Definition}[section]
\newtheorem{question}[definition]{Question}
\newtheorem{remark}[definition]{Remark}
\newtheorem{example}[definition]{Example}

\theoremstyle{plain}
\newtheorem{proposition}[definition]{Proposition}
\newtheorem{theorem}[definition]{Theorem}
\newtheorem{lemma}[definition]{Lemma}
\newtheorem{corollary}[definition]{Corollary}

\title{Finiteness of the set of associated primes for local cohomology modules of ideals via properties of almost factorial rings}

\author{Ryotaro Hanyu}
\date{}

\begin{document}
\maketitle

\begin{abstract}
We investigate the finiteness of the set of associated primes for local cohomology modules $H_I^{i}(J)$ of an ideal $J$ generated by an $R$-sequence, through the comparison of $H_I^{d+1}(J)$ and $H_I^d(R/J)$, where  $d = \depth_I(R)$. The properties of almost factorial rings play a key role in enabling this comparison. Under suitable conditions, we prove that the finiteness of $\Ass H_I^{d+1}(J)$ is equivalent to that of $\Ass H_I^d(R/J)$. Moreover, we give a few conditions under which the finiteness of $\Ass H_I^i(J)$ holds for all $i$.
\end{abstract}

\noindent\textbf{\small Keywords:} {\small Local cohomology, Associated primes}\\
\noindent{\small\textbf{MSC:} 13D45, 13A15, 13F05}

\section{Introduction}
Let $I$ be an ideal of a Noetherian ring $R$. Although a local cohomology module $H_I^i(R)$ is not necessarily finite over $R$, we may still ask whether the set $\Ass H_I^i(R)$ of associated primes of $H_I^i(R)$ is finite, as this can provide insights into the finiteness properties of local cohomology modules. Indeed, the set $\Ass M$ is finite for any finite $R$-module $M$. There are examples in which $\Ass H_I^i(R)$ is infinite : one described by Katzman\cite{Kat} in the context of $R$ being a local algebra over a field, and another given by Singh and Swanson\cite[Theorem 5.4]{SiSw} in the context of $R$ being a UFD algebra over a field. In contrast, there are examples in which $\Ass H_I^i(R)$ is finite. For instance, Huneke and Sharp\cite[Corollary 2.3]{HuSh} proved that $\Ass H_I^i(R)$ is finite for any ideal $I$ and any $i\geq0$ under the condition that $R$ is a regular ring of prime characteristic $p>0$, and Lyubeznik\cite[Corollary 3.6(c)]{Lyu1} proved the same result under the condition that $R$ is a regular semi-local ring containing a field. 

\begin{definition}
Let $R$ be a Noetherian ring. If $\Ass H_I^i(R)$ is finite for any ideal $I$ of $R$ and any $i\geq0$, $R$ is called \textit{LC-finite}.
\end{definition}

As mentioned above, regular rings of characteristic $p>0$, and regular semi-local rings containing a field are LC-finite. See \cite{BBLSZ} and \cite{Lyu3}, for more examples of an LC-finite ring.

Since the ring $R$ is generated by a single element as an $R$-module, one might expect that the local cohomology module $H_I^i(M)$ of a finite $R$-module $M$ has a more complicated structure than $H_I^i(R)$. The finiteness of $\Ass H_I^i(M)$ holds for $i=0$ and $i=1$. For $i=0$, this follows from the fact that $H_I^0(M)$ is finite over $R$. For $i=1$, the result was shown by Brodmann and Faghani\cite{BrFa}. For $i=2$, in the case where $M$ is an ideal, if $R$ is a locally almost factorial Noetherian normal ring, then $\Ass H_I^2(J)$ is finite for any ideal $I$, $J$ of $R$ \cite[Theorem 2.1]{Lew}. Under the same condition, there is an example in which $\Ass H_I^3(J)$ is an infinite set \cite[Section 3]{Lew}. In this example, the infiniteness of $\Ass H_I^3(J)$ is deduced from the infiniteness of $\Ass H_I^2(R/J)$ which is described in \cite{Kat}. To be precise, in this example, $H_I^2(R/J)$ is isomorphic to a submodule of $H_I^3(J)$ and all of the associated primes of $H_I^2(R/J)$ are inherited by $H_I^3(J)$. 

See \cite{DM} and \cite{Mar}, for more examples of an $R$-module $M$ such that $\Ass H_I^i(M)$ is finite.

We investigate the finiteness of the set of associated primes for local cohomology modules of ideals, based on the finiteness of that for local cohomology modules of Noetherian rings. The background involves the following question that was studied in \cite{Lew}:

\begin{question}
Let $R$ be an LC-finite regular ring, $J$ an ideal of $R$ generated by an $R$-sequence, and $I$ an ideal of $R$ containing $J$. Fix $i\geq1$. Does the finiteness of $\Ass H_I^{i-1}(R/J)$ imply the finiteness of $\Ass H_I^{i}(J)$?
\end{question}

For $i\le2$, it is true. If $R/J$ is a locally almost factorial normal ring, it is true for $i\le4$ \cite[Theorem 4.2]{Lew}. Question 1.2 is reduced to the following question, see \cite[Lemma 4.1 and Corollary 4.3]{Lew}: 

\begin{question}
Let $R$ be an LC-finite regular ring, $J$ an ideal of $R$ generated by an $R$-sequence of length at least $2$, and $I$ an ideal of $R$ containing $J$. Fix $i\geq1$, and suppose $\depth_I(R)=i-1$. Does the finiteness of $\Ass H_I^{i-1}(R/J)$ imply the finiteness of $\Ass H_I^{i}(J)$?
\end{question}

We consider cases according to the value of  $\depth_I(R/J)$, and in each case, we give an affirmative answer to the question under suitable assumptions : For the case $\depth_I(R/J)\leq1$, the question is true if $R/J$ is a locally almost factorial normal ring (see \cite[Theorem 4.2]{Lew} or Section 3). For the case $\depth_I(R/J)\geq2$, we give the following statement, which generalizes the case $\depth_I(R/J)=1$. This result will be proved in Section 4. 

\begin{theorem}
Let $R$ be a Noetherian ring, $J$ an ideal generated by an $R$-sequence of length at least $2$. Set $d= \depth_I(R)$, and let $I$ be an ideal containing $J$ such that $e=\depth_I(R/J)\geq2$. Suppose that $\Ass H_I^{d+1}(R)$ is finite and that there is an $R/J$-sequence $x_1,\ldots, x_{e-1}\in I$ such that $R/(x_1,\ldots, x_{e-1}, J)$ is a locally almost factorial normal ring. Then, 
\[\Ass H_I^{d+1}(J)\text{ is finite}\Leftrightarrow\Ass H_I^{d}(R/J)\text{ is finite.}\]
\end{theorem}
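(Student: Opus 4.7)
The plan is to reduce the equivalence to the base case $\depth_I(R/J)=1$ with $R/J$ locally almost factorial normal, handled by \cite[Theorem 4.2]{Lew} (see Section 3), by induction on $e$ using the elements $x_1,\ldots,x_{e-1}$ one at a time.

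First I translate the statement using the long exact sequence coming from $0\to J\to R\to R/J\to 0$. Its piece
\[
H_I^d(R)\xrightarrow{\alpha}H_I^d(R/J)\xrightarrow{\beta}H_I^{d+1}(J)\xrightarrow{\gamma}H_I^{d+1}(R)
\]
puts $C:=\operatorname{im}(\beta)=\operatorname{coker}(\alpha)$ into the short exact sequence $0\to C\to H_I^{d+1}(J)\to\operatorname{im}(\gamma)\to 0$ with $\operatorname{im}(\gamma)\subseteq H_I^{d+1}(R)$. Since $\Ass H_I^{d+1}(R)$ is finite by hypothesis, this gives: $\Ass H_I^{d+1}(J)$ is finite if and only if $\Ass C$ is finite. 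So the theorem reduces to showing that $\Ass C$ is finite if and only if $\Ass H_I^d(R/J)$ is finite.

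Next I induct on $e\geq 2$. For the inductive step I apply $H_I^\bullet$ to the short exact sequence $0\to R/J\xrightarrow{x_1}R/J\to R/(x_1,J)\to 0$ and use the basic identity $\Ass M=\Ass M[x]$, valid for any $I$-torsion $R$-module $M$ and any $x\in I$ (since every associated prime of such $M$ contains $I$, any representing element is already killed by $x$). Applied to $H_I^d(R/J)$ and $x_1$, this identifies $\Ass H_I^d(R/J)$ with $\Ass$ of the submodule $H_I^d(R/J)[x_1]$, which by the long exact sequence fits into an exact sequence involving $H_I^{d-1}(R/(x_1,J))$ and the image of $H_I^{d-1}(R/J)$. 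A parallel analysis on the $J$-side, using that $x_1$ together with the generating $R$-sequence for $J$ forms an $R$-sequence (after a harmless adjustment by elements of $J$), yields an analogous description of $\Ass C$. These compatible reductions translate the equivalence for $(R,J,e)$ into the analogous one for $(R/(x_1),J/x_1J,e-1)$, the quotient $R/(x_1,\ldots,x_{e-1},J)$ remaining unchanged. The finiteness of $\Ass H_I^{d+1}(R)$ propagates to $\Ass H_I^d(R/(x_1))$ via another use of the $x_1$-SES on $R$, combined with the fact that $\Ass H_I^d(R)$ is finite because $d=\depth_I(R)$ is the first non-vanishing level (so $H_I^d(R)$ is either finite as an $R$-module or its $\Ass$ is finite by Brodmann--Lashgari). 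Iterating $e-1$ times delivers the base case.

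The main obstacle is the asymmetry of $\Ass$ under quotient maps: since $C$ is a quotient of $H_I^d(R/J)$, the implication ``$\Ass H_I^d(R/J)$ finite $\Rightarrow \Ass C$ finite'' is not formal. The identity $\Ass M=\Ass M[x]$ is what circumvents this, by converting each cokernel-of-multiplication-by-$x_1$ that appears into a submodule of the ambient $I$-torsion module, whose associated primes are automatically bounded. Carrying this submodule--quotient bookkeeping consistently through all $e-1$ inductive layers, and verifying at each step that the finiteness of $\Ass H_I^{d+1}(R)$ transfers faithfully, constitutes the principal technical work.
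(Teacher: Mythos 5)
Your opening reduction is fine: with $\Ass H_I^{d+1}(R)$ finite, the long exact sequence does reduce the theorem to showing that $\Ass C$ is finite if and only if $\Ass H_I^{d}(R/J)$ is, where $C=\operatorname{coker}\bigl(H_I^d(R)\to H_I^d(R/J)\bigr)$. But the inductive step does not close. The identity $\Ass M=\Ass M[x]$ (correct for $I$-torsion $M$ and $x\in I$) only trades $\Ass H_I^d(R/J)$ for $\Ass$ of the submodule $H_I^d(R/J)[x_1]$, and the long exact sequence of $0\to R/J\xrightarrow{x_1}R/J\to R/(x_1,J)\to 0$ exhibits that submodule as the image of the connecting map, i.e.\ as a \emph{quotient} of $H_I^{d-1}(R/(x_1,J))$ by the image of $H_I^{d-1}(R/J)$, which need not vanish since $d-1\geq e+1$. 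Finiteness of $\Ass$ does not pass to quotients in either direction, so this step does not translate the statement at level $e$ into the one at level $e-1$. The same obstruction reappears in your propagation step: $H_I^d(R/(x_1))$ sits in an extension whose bottom piece is $\operatorname{coker}\bigl(x_1\colon H_I^d(R)\to H_I^d(R)\bigr)$, again a quotient of a module with finite $\Ass$ and hence uncontrolled. (There is also the unaddressed question of what replaces $J$ and $H_I^{d+1}(J)$ over $R/(x_1)$, since $x_1$ is only assumed regular on $R/J$; ``harmless adjustment'' does not settle this.)

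The tell is that your argument never uses the hypothesis that $R/(x_1,\dots,x_{e-1},J)$ is locally almost factorial and normal except at the base case, whereas that hypothesis is exactly what breaks the quotient problem in the paper. There, Proposition 4.1 and Lemma 4.2 produce, on a finite open cover of $V(I)$, an identification $H_I^d(S_{f_i})\simeq H_{I_0}^d(S_{f_i})_{y_i}$ with $\depth_{I_0}(S)>e$, so $H_I^d(R/J)$ locally carries an $(S_{f_i})_{y_i}$-module structure; by Lemma 3.7 the map $H_I^d(R_{f_i})\to H_I^d(S_{f_i})$ then factors through $H_I^d\bigl((R_{f_i})_{y_i}\bigr)\simeq H_{I_0}^d\bigl((R_{f_i})_{y_i}\bigr)=0$, the vanishing coming from the depth count $\depth_{I_0}(R)=\depth_{I_0}(S)+\depth_J(R)>d$. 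Hence the map $H_I^d(R)\to H_I^d(R/J)$ vanishes locally, $C=H_I^d(R/J)$ on each piece of the cover, and the equivalence is immediate. Some argument of this kind --- showing the connecting map is zero rather than trying to bound $\Ass$ of its cokernel --- is the missing idea.
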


In Section 5, we consider the application of the theorem to ideals $I\supset J$, each of which is generated by a part of a regular system of parameters of a regular local ring $(R, \mathfrak{m})$. As a consequence, we obtain the following result, the proof of which is provided in Section 5:

\begin{theorem}
Let $(R, \mathfrak{m})$ be an LC-finite regular local ring and $I\supset J$ ideals of $R$ with $\depth_J(R)\geq2$. Suppose that each of $I$ and $J$ is generated by a part of a regular system of parameters. Then, for any $i\geq1$,
\[
\Ass H_I^i(J)\text{ is finite}\Leftrightarrow\Ass H_I^{i-1}(R/J)\text{ is finite.}
\]
\end{theorem}

Over Noetherian local rings $R$ of small dimension, Marley\cite{Mar} has shown several results regarding the finiteness of $\Ass H_I^i(M)$ for finite $R$-modules $M$. Using Marley's result, which establishes the finiteness of $\Ass H_I^{i-1}(R/J)$, we deduce the finiteness of $\Ass H_I^i(J)$ by applying the theorem above.
\section{Background}

\subsection{Associated primes}
Let $R$ be a ring, and $M$ an $R$-module. A prime ideal $P$ of $R$ is called an \textit{associated prime} of $M$ if $P=\ann(x)$ for some $x\in M$. The set of associated primes of $M$ is written $\Ass M$, or $\Ass_R M$.

\begin{remark}
Let $A\to B$ be a ring homomorphism, and $M$ a $B$-module. If $A\to B$ is surjective, $B=A/I$ for some ideal $I$ of $A$. $\Spec A/I$ is identified with the Zariski closed subset $V(I)$ of $\Spec A$, and $\Ass_{A/I} M = \Ass_A M$ as a subset of $\Spec A$. Let $S$ be a multiplicative set of $A$, and $B=A_S$. $\Spec B$ is identified with a subset of $\Spec A$. If A is Noetherian, $\Ass_{A_S} M = \Ass_A M$ as a subset of $\Spec A$ \cite[Theorem 6.2]{Mat}. 
\end{remark}

Let $R$ be a ring, $M$ an $R$-module, and $P\in\Ass M$. Since there exists an injection $A/P\hookrightarrow M$, we have an exact sequence $0\to A_{P}/PA_P\to M_P$, where $A_{P}/PA_P\neq0$. Thus,  $M_P\neq0$ and $P\in\Supp M$. This implies $\Ass M\subset\Supp M$.

\subsection{Local cohomology modules}

Let $R$ be a ring, $I$ an ideal of $R$, and $M$ an $R$-module. Set
\[H_I^0(M)=\lbrace x\in M\mid\text{there exists $n>0$ such that $I^{n}x=0$}\rbrace.\]
$H_I^0$ is a left exact functor. Let $H_I^i$ be the $i$-th right derived functor of $H_I^0$. $H_I^i(M)$ is called the $i$-th \textit{local cohomology module of M with respect to I}. 

From the definition, $H_I^i(M)=H_{\sqrt{I}}^i(M)$ and every element of $H_I^i(M)$ is annihilated by some power of $I$. If $P\in\Spec R$ does not contain $I$, a multiplicative set $R \setminus P$ contains some element $x\in I$. Since any element of $H_I^i(M)$ is annihilated by some power of $x$, $H_I^i(M)_P=0$. Therefore, $\Supp H_I^i(M)\subset V(I)$, and $\Ass H_I^i(M)\subset V(I)$.

Let $R$ be a Noetherian ring, $I=(x_1,\ldots, x_t)$ an ideal of $R$, and $M$ an \text{$R$-module}. A local cohomology module $H_I^i(M)$ is computed  as the $i$-th cohomology of the \v{C}ech complex, which has nonzero terms only in the range $0\le i\le t$, see \cite[Theorem A1.3]{Eis}. From this, the following holds:

\begin{proposition}
Let $R$ be a Noetherian ring, $I=(x_1,\ldots, x_t)$ an ideal of $R$, and $M$ an $R$-module. If $i>t$, then $H_I^i(M)=0$.
\end{proposition}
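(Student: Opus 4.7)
The plan is to invoke directly the Čech complex characterization of local cohomology recalled in the paragraph just preceding the statement. Write $\check{C}^\bullet(x_1,\ldots,x_t;M)$ for the Čech complex associated with the generators $x_1,\ldots,x_t$ of $I$; its term in cohomological degree $j$ is the direct sum of localizations $M_{x_{k_1}\cdots x_{k_j}}$ indexed by strictly increasing tuples $1\le k_1<\cdots<k_j\le t$. For $j>t$ there are no such tuples, so $\check{C}^j=0$, and the complex is concentrated in the range $0\le j\le t$.

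Since $H_I^i(M)$ is, by the cited theorem of Eisenbud, the $i$-th cohomology of $\check{C}^\bullet(x_1,\ldots,x_t;M)$, and since for $i>t$ both the term $\check{C}^i$ and its successor $\check{C}^{i+1}$ vanish, the cohomology in degree $i$ must vanish as well, giving $H_I^i(M)=0$.

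There is no real obstacle: the statement is essentially a repackaging of the observation that the Čech complex on $t$ elements has nonzero terms only in degrees $0$ through $t$, and the proof consists in making this explicit. The only point that requires the Noetherian hypothesis is the identification of $H_I^i(M)$ with Čech cohomology itself, which is already cited and hence may be used without further comment.
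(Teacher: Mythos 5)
Your argument is correct and is exactly the one the paper uses: the statement is deduced directly from the \v{C}ech complex description of $H_I^i(M)$, whose terms vanish in degrees above $t$. No issues.
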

\begin{flushright}
$\square$
\end{flushright}

For an ideal $I$ of a Noetherian ring $R$, the \textit{arithmetic rank} of $I$, written $\ara(I)$, is defined by 
\[
\ara(I) = \text{min}\lbrace n\geq0\mid\text{there exists $x_1,\ldots, x_n$ such that $\sqrt{I}=\sqrt{(x_1,\ldots, x_n)}$}\rbrace.
\]
For the zero ideal, set $\ara(0)$ to be $0$. The proposition states that if $R$ is Noetherian, then for any ideal $I$ of $R$ and any $R$-module $M$, $H_I^i(M)=0$ for all $i>\ara(I)$.

\begin{lemma}
For ideals $I_1, \ldots, I_t$ of a Noetherian ring $R$, 
\[\sqrt{I_1\cap\cdots\cap I_t}=\sqrt{I_1\cdots I_t}.\]
\end{lemma}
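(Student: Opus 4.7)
The plan is to prove the two inclusions of radicals separately, and in fact to show that the Noetherian hypothesis is not essential (the lemma holds for any commutative ring).

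For the inclusion $\sqrt{I_1 \cdots I_t} \subseteq \sqrt{I_1 \cap \cdots \cap I_t}$, I would simply use the containment $I_1 \cdots I_t \subseteq I_1 \cap \cdots \cap I_t$, which holds because any product of elements drawn one from each $I_j$ lies in each individual $I_j$, hence in the intersection. Taking radicals preserves inclusions, so this direction is immediate.

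For the reverse inclusion $\sqrt{I_1 \cap \cdots \cap I_t} \subseteq \sqrt{I_1 \cdots I_t}$, I would take any $x \in \sqrt{I_1 \cap \cdots \cap I_t}$, so that $x^n \in I_1 \cap \cdots \cap I_t$ for some $n \geq 1$. Then $x^n \in I_j$ for each $j = 1, \ldots, t$, and by forming the product of $t$ copies (one drawn from each $I_j$) I get $(x^n)^t = x^{nt} \in I_1 \cdots I_t$. Hence $x \in \sqrt{I_1 \cdots I_t}$.

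There is no real obstacle here; the only minor subtlety is to be explicit that writing $x^{nt}$ as a product with one factor $x^n$ attributed to each $I_j$ is what places it inside the product ideal rather than merely inside some individual $I_j$. The Noetherian assumption in the statement is not used in the argument.
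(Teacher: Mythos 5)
Your proof is correct and takes essentially the same approach as the paper: the paper reduces to the case $t=2$ by induction and then observes $a^{2n}=a^n a^n\in I_1I_2$, whereas you handle general $t$ directly via $x^{nt}\in I_1\cdots I_t$, which is the same idea. Your remark that the Noetherian hypothesis is not needed is also accurate.
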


\begin{proof}
By induction on $t$, it suffices to show that $\sqrt{I_1\cap I_2}=\sqrt{I_1I_2}$. It is obvious $\sqrt{I_1I_2}\subset\sqrt{I_1\cap I_2}$. If $a\in\sqrt{I_1\cap I_2}$, $a^n\in I_1\cap I_2$ for some $n>0$. Then, $a^{2n}=a^na^n\in I_1I_2$, and $a\in\sqrt{I_1I_2}$.
\end{proof}

The following two results are obtained since a local cohomology module can be computed by using the \v{C}ech complex, see \cite[Proposition 2.14]{Hun}.

\begin{proposition}
 Let $R$ be a Noetherian ring, $I$ an ideal of $R$, and $M$ an $R$-module. If $R\to S$ is a flat ring homomorphism, $H_I^i(M)\otimes_{R}S=H_{IS}^i(M\otimes_{R}S)$.
\end{proposition}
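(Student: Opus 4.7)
The plan is to exploit the \v{C}ech complex description of local cohomology already invoked in the paragraph preceding the statement, and combine it with the fact that taking cohomology of a complex commutes with tensoring against a flat module.

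First I would choose generators $x_1, \ldots, x_t$ of $I$, so that by \cite[Theorem A1.3]{Eis} the \v{C}ech complex $\check C^\bullet(x_1, \ldots, x_t; M)$ computes $H_I^i(M)$. Each term of this complex is a finite direct sum of localizations of the form $M_{x_{i_1}\cdots x_{i_k}}$. Tensoring the entire complex over $R$ with $S$ gives a complex of $S$-modules, and because $S$ is $R$-flat, the natural map
\[
M_{x_{i_1}\cdots x_{i_k}} \otimes_R S \longrightarrow (M \otimes_R S)_{(x_{i_1}/1)\cdots(x_{i_k}/1)}
\]
is an isomorphism; assembling these term by term identifies $\check C^\bullet(x_1, \ldots, x_t; M) \otimes_R S$ with the \v{C}ech complex $\check C^\bullet(x_1/1, \ldots, x_t/1; M \otimes_R S)$. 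Since the images $x_i/1$ generate the ideal $IS$, the latter complex computes $H_{IS}^i(M \otimes_R S)$.

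Next I would use flatness of $S$ once more, now at the level of cohomology: for any complex $C^\bullet$ of $R$-modules, $H^i(C^\bullet) \otimes_R S \cong H^i(C^\bullet \otimes_R S)$, because tensoring with a flat module preserves kernels and cokernels. Applying this to $C^\bullet = \check C^\bullet(x_1, \ldots, x_t; M)$ yields
\[
H_I^i(M) \otimes_R S \;=\; H^i(\check C^\bullet) \otimes_R S \;\cong\; H^i(\check C^\bullet \otimes_R S) \;=\; H_{IS}^i(M \otimes_R S),
\]
which is the desired identification.

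There is no serious obstacle here; the proof is essentially bookkeeping on the \v{C}ech complex. The only point that deserves care is the verification of the localization-tensor identity in the displayed formula above, where flatness of $S$ over $R$ is used to commute localization at the multiplicative set generated by $x_{i_1}\cdots x_{i_k}$ with the tensor product $-\otimes_R S$. Once this is in hand, the remainder of the argument is formal.
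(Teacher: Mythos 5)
Your argument is correct and is essentially the same as the one the paper relies on by citing the \v{C}ech complex computation of local cohomology: tensor the \v{C}ech complex with $S$, identify it with the \v{C}ech complex of $M\otimes_R S$ over $IS$, and use flatness to commute cohomology with $-\otimes_R S$. The only minor quibble is that the identification $M_{x_{i_1}\cdots x_{i_k}}\otimes_R S\cong (M\otimes_R S)_{(x_{i_1}/1)\cdots(x_{i_k}/1)}$ already holds for arbitrary $S$ (localization is itself a base change), so flatness is genuinely needed only in the final step where cohomology is commuted with the tensor product.
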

\begin{flushright}
$\square$
\end{flushright}

\begin{proposition}
Let $R$ be a Noetherian ring, $I$ an ideal of $R$, and $R\to S$ a ring homomorphism. Let $M$ be a $S$-module, then $H_I^i(M)=H_{IS}^i(M)$.
\end{proposition}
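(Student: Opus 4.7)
The plan is to compute both sides by the \v{C}ech complex and show that the two complexes are literally identical. Since $R$ is Noetherian, pick generators $x_1,\dots,x_t$ of $I$; writing $\phi\colon R\to S$ for the given ring map, the images $\phi(x_1),\dots,\phi(x_t)$ generate $IS$. The \v{C}ech complex recipe recalled just before the statement gives $H_I^i(M)$ as the $i$-th cohomology of $\check{C}^{\bullet}(x_1,\dots,x_t;M)$ regarded as a complex of $R$-modules, and analogously $H_{IS}^i(M)$ as the $i$-th cohomology of $\check{C}^{\bullet}(\phi(x_1),\dots,\phi(x_t);M)$ regarded as a complex of $S$-modules.

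The crux is the observation that these two \v{C}ech complexes coincide as complexes of abelian groups. Indeed, every term is a finite direct sum of localizations of the form $M_{x_{i_1}\cdots x_{i_k}}$, and for any $x\in R$ the $R$-module localization $M_x$ agrees with the $S$-module localization $M_{\phi(x)}$: both are built by formally inverting the single endomorphism ``multiplication by $x$'' on $M$, which equals ``multiplication by $\phi(x)$'' because the $R$-action on $M$ factors through $\phi$. The \v{C}ech differentials are signed sums of canonical localization maps, and these maps are also characterized entirely by the same endomorphism, so they match on the two sides. Taking $H^i$ then gives $H_I^i(M)=H_{IS}^i(M)$.

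The only non-cosmetic point to check is that the \v{C}ech complex computation of local cohomology, stated in the paper for Noetherian rings, applies equally to $H_{IS}^i(M)$ even when $S$ is not assumed Noetherian. I do not expect this to be a real obstacle: the standard proof goes through whenever the ideal in question is finitely generated, and $IS$ is generated by the finite set $\{\phi(x_1),\dots,\phi(x_t)\}$. Once this remark is made, the argument collapses into the bookkeeping identification $\check{C}^{\bullet}(x_1,\dots,x_t;M)=\check{C}^{\bullet}(\phi(x_1),\dots,\phi(x_t);M)$ carried out above.
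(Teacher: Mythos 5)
Your argument is correct and is essentially the approach the paper itself takes (via its citation of \cite[Proposition 2.14]{Hun}): choose generators $x_1,\dots,x_t$ of $I$, note their images generate $IS$, and observe that the two \v{C}ech complexes coincide term by term and map by map because localizing $M$ at $x$ as an $R$-module and at $\phi(x)$ as an $S$-module inverts the same multiplication endomorphism. One caution about your closing remark: for a non-Noetherian $S$, finite generation of $IS$ alone does not guarantee that the \v{C}ech complex computes the derived-functor local cohomology (one needs the generating sequence to be weakly proregular), so strictly speaking the statement should carry a Noetherian hypothesis on $S$ --- harmless here, since every $S$ that occurs in the paper is a quotient or localization of the Noetherian ring $R$.
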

\begin{flushright}
$\square$
\end{flushright}

The following theorem exhibits the relationship among dimension, depth and the vanishing of a local cohomology module, see \cite[Theorem 5.8]{Hun}.

\begin{theorem}
Let $R$ be a Noetherian ring, $I$ an ideal of $R$, and $M$ a finite $R$-module. Then, $H_I^i(M)=0$, for $i<\depth_I(M)$, and for $i>\dim M$.
\end{theorem}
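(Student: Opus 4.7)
The plan is to treat the two vanishing statements separately, each by an induction that exploits the long exact sequence of local cohomology coming from multiplication by an appropriate non-zero-divisor, together with the fact (already noted in the excerpt) that every element of $H_I^i(N)$ is annihilated by some power of $I$.

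For the lower bound, I would induct on $n=\depth_I(M)$, showing that $H_I^i(M)=0$ for all $i<n$. The base case $n=0$ is vacuous. For the inductive step, pick $x\in I$ which is a non-zero-divisor on $M$; such an $x$ exists because $\depth_I(M)\geq 1$. The short exact sequence $0\to M\xrightarrow{x}M\to M/xM\to 0$ yields the long exact sequence
\[
\cdots\to H_I^{i-1}(M/xM)\to H_I^i(M)\xrightarrow{x}H_I^i(M)\to\cdots.
\]
Since $\depth_I(M/xM)=\depth_I(M)-1=n-1$, the inductive hypothesis gives $H_I^{i-1}(M/xM)=0$ whenever $i-1<n-1$, i.e.\ whenever $i<n$. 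Hence multiplication by $x$ is injective on $H_I^i(M)$ in that range; but every element of $H_I^i(M)$ is killed by some power of $I$, hence by some power of $x$, forcing $H_I^i(M)=0$.

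For the upper bound, I would induct on $d=\dim M$. In the base case $d=0$, $M$ is a finite module whose support consists of finitely many maximal ideals, so $M$ has finite length and admits a composition series with factors of the form $R/\mathfrak{m}$. By the long exact sequence and d\'evissage it suffices to verify $H_I^i(R/\mathfrak{m})=0$ for $i>0$ and every maximal $\mathfrak{m}$: if $I\not\subset\mathfrak{m}$, choose $x\in I\setminus\mathfrak{m}$, so multiplication by $x$ is an isomorphism on $R/\mathfrak{m}$, hence on $H_I^i(R/\mathfrak{m})$, while simultaneously some power of $x$ kills each element, giving $0$; if $I\subset\mathfrak{m}$, then $R/\mathfrak{m}$ is $I$-torsion and therefore acyclic for $H_I^0$, so $H_I^i(R/\mathfrak{m})=0$ for $i>0$. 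For the inductive step, a prime filtration of $M$ together with the long exact sequence reduces the claim to modules of the form $R/P$ with $\dim R/P\leq d$; replacing $R$ by $R/P$ (using Proposition~2.5), I may assume $M=R$ is a Noetherian domain of dimension $d$. If $I=0$ (or more generally $IR=0$), then $H_I^i(R)=0$ for $i>0$ is immediate. Otherwise pick $0\neq x\in I$; since $R$ is a domain, $x$ is a non-zero-divisor, and Krull's principal ideal theorem gives $\dim R/xR\leq d-1$. The long exact sequence associated to $0\to R\xrightarrow{x}R\to R/xR\to 0$ reads
\[
\cdots\to H_I^{i-1}(R/xR)\to H_I^i(R)\xrightarrow{x}H_I^i(R)\to\cdots,
\]
and for $i>d$ the inductive hypothesis yields $H_I^{i-1}(R/xR)=0$ because $i-1>d-1\geq\dim R/xR$. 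Hence multiplication by $x$ is injective on $H_I^i(R)$, and the $I$-torsion argument again forces $H_I^i(R)=0$.

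The main technical obstacle I anticipate is the reduction step in the upper bound: one must be careful to pass cleanly from a finite module $M$ to a domain quotient $R/P$ without losing the control $\dim R/P\leq\dim M$, and then to verify that Proposition~2.5 legitimately allows computing local cohomology after this base change. A secondary subtlety is treating the edge cases where $I$ sits inside the prime $P$ being killed, or where $I$ is nilpotent on the quotient, so that no genuine non-zero-divisor from $I$ is available; in those situations the module in question is already $I$-torsion and vanishing for $i>0$ is immediate, but this case distinction needs to be made explicit.
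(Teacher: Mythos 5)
The paper does not prove this statement at all: it is quoted as background with a pointer to \cite[Theorem 5.8]{Hun}, so there is no in-paper argument to compare yours against. Your proof is the standard double induction (long exact sequence attached to $0\to M\xrightarrow{x}M\to M/xM\to 0$, plus the fact that every element of a local cohomology module is killed by a power of $I$, hence of $x$), and it is essentially correct. Three points deserve to be made explicit rather than asserted. First, the claim that an $I$-torsion module $N$ has $H_I^i(N)=0$ for $i>0$ (used in your base case and in the $I\subset P$ case) is a lemma, not a tautology; in the present setting it follows at once from the \v{C}ech description already recorded in Section 2.2, since $N_x=0$ for every $x\in I$ forces the \v{C}ech complex to be concentrated in degree $0$, but the phrase ``acyclic for $H_I^0$'' as written is only an assertion. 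Second, in the upper-bound induction the base ring changes twice (to $R/P$ via Proposition 2.5, then to $(R/P)/x(R/P)$), so the inductive hypothesis must be quantified over all Noetherian rings and all finite modules of dimension $<d$ simultaneously; also note $\dim R/P$ may be strictly less than $d$, in which case the hypothesis applies directly and no $x$ is needed. Third, the degenerate cases: if $IM=M$ then $\depth_I(M)=\infty$ and an induction on $\depth_I(M)$ does not terminate --- here one should observe $\Supp M\cap V(I)=\emptyset$, so all $H_I^i(M)$ vanish (or reindex the induction on $i$ instead of on the depth) --- and if $\dim M=\infty$ the second assertion is vacuous. None of these is a genuine mathematical gap; they are the usual housekeeping for this argument, which is the same one found in the cited reference.
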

\begin{flushright}
$\square$
\end{flushright}

\subsection{Almost factorial rings}

\begin{definition}[{\cite[Section 12]{Mat}}]
Let $A$ be an integral domain and $K$ its field of fractions. Let $K^{\ast}$ be the multiplicative group of $K$. Let $\mathcal{F}=\lbrace R_\lambda \rbrace_{\lambda\in\Lambda}$ be a family of DVRs of $K$ and $v_\lambda$ the normalized valuation corresponding to $R_\lambda$. If the following two conditions hold, $A$ is called a \textit{Krull ring}, and $\mathcal{F}$ is called a defining family of $A$ :
\begin{enumerate}
\item $A=\underset{\lambda\in\Lambda}\bigcap R_{\lambda}$.
\item for every $x\in K^{\ast}$, there are at most a finite number of $\lambda\in\Lambda$ such that $v_\lambda(x)\neq0$.
\end{enumerate}
\end{definition}

A Noetherian normal domain is known to be a Krull ring. Let $\mathcal{F}$ be a defining family of a Krull ring $A$, and $P$ a height 1 prime ideal of $A$. Then, $A_P\in\mathcal{F}$ (see \cite[Theorem 12.3]{Mat}), and $A_P$ is a DVR. Let $\mathcal{P}$ be the set of height 1 prime ideals of $A$. Let $D(A)$ be the free Abelian group generated by $\mathcal{P}$, that is, every element of $D(A)$ is written as a formal sum 
\[\sum_{P\in\mathcal{P}}n_{P}\cdot P,\quad n_{P}\in\mathbb{Z},\]
where all but finitely many $n_P=0$ and addition is defined by
\[
\sum_{P\in\mathcal{P}}n_{P}\cdot P+\sum_{P\in\mathcal{P}}n^{\prime}_{P}\cdot P=\sum_{P\in\mathcal{P}}(n_{P}+n^{\prime}_{P})\cdot P.
\]
Let $K$ be the field of fractions of $A$ and $K^{\ast}$ the multiplicative group of $K$. For $a\in K^{\ast}$, set $\divisor(a) =\sum_{P\in\mathcal{P}}v_{P}(a)\cdot P$, where $v_P$ is the normalized valuation corresponding to $A_P$. div : $K^{\ast}\to D(A)$ is a group homomorphism. The \textit{divisor class group} of $A$, written $C(A)$, is defined by $C(A) = D(A)/\divisor(K^{\ast})$. In fact, $A$ is a UFD if and only if $C(A)$=0.

\begin{definition}
Krull ring $A$ is said to be \textit{almost factorial} if $C(A)$ is a torsion group, that is, every element of $C(A)$ has finite order.
\end{definition}

Note that UFDs are almost factorial. The following property of almost factorial rings enables us to rewrite local cohomology modules into more tractable form, which facilitates their computation (see Lemma 3.5, Proposition 3.6).

\begin{proposition}
Let $A$ be an almost factorial Krull ring. If $I$ is an ideal of pure hight $1$, that is, every minimal prime ideal of $I$ has height $1$, then $\sqrt{I}=\sqrt{aA}$ for some $a\in A$. 
\end{proposition}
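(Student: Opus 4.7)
The plan is to realize $\sqrt{I}$ as a finite intersection of height-$1$ primes, to write each of these primes as the radical of a principal ideal using the almost factoriality hypothesis, and then to combine those principal generators into a single element whose radical recovers $\sqrt{I}$.

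First I would let $P_1,\ldots,P_n$ be the minimal primes of $I$. Each has height $1$ by hypothesis, and picking any nonzero $x\in I$ shows, via the finite character axiom of the defining family (only finitely many $v_\lambda(x)\neq0$), that these primes are finite in number; thus $\sqrt{I}=P_1\cap\cdots\cap P_n$. Next, almost factoriality of $A$ gives, for each $i$, an integer $e_i\geq 1$ with $e_i[P_i]=0$ in $C(A)$, that is $e_iP_i=\divisor(a_i)$ for some $a_i\in K^{\ast}$. Because $\divisor(a_i)$ is effective, $v_Q(a_i)\geq 0$ for every height-$1$ prime $Q$, so $a_i\in A_Q$ for every $Q\in\mathcal{P}$ and hence $a_i\in A$ by the defining intersection $A=\bigcap_\lambda R_\lambda$. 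The support of $\divisor(a_i)$ is exactly $\{P_i\}$, so $P_i$ is the unique height-$1$ prime containing $a_i$, which gives $\sqrt{a_iA}=P_i$.

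Finally I would set $a=a_1\cdots a_n$; then $\divisor(a)=\sum_ie_iP_i$, whose support is $\{P_1,\ldots,P_n\}$, and since every minimal prime of the principal ideal $aA$ has height $1$, those minimal primes are exactly $\{P_1,\ldots,P_n\}$. Therefore $\sqrt{aA}=P_1\cap\cdots\cap P_n=\sqrt{I}$, as required. The main subtle point will be the Krull-ring identification, for a nonzero $b\in A$, of the set of minimal primes of $bA$ with the support of $\divisor(b)$: this is what lets us pass from the divisor-theoretic equality $\divisor(a_i)=e_iP_i$ to the ring-theoretic conclusion $\sqrt{a_iA}=P_i$, and it is what drives the computation of $\sqrt{aA}$ in the last step; once that identification is in hand, the rest is a short bookkeeping argument.
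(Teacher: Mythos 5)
Your proposal is correct and follows essentially the same route as the paper: decompose $\sqrt{I}$ into its finitely many height-$1$ minimal primes, use torsion of $C(A)$ to write $e_iP_i=\divisor(a_i)$ with $a_i\in A$, deduce $\sqrt{a_iA}=P_i$, and take $a=a_1\cdots a_n$. The only cosmetic difference is that where you invoke the general Krull-ring fact that the minimal primes of a nonzero principal ideal are exactly the height-$1$ primes in the support of its divisor, the paper verifies this by hand (computing $a_iA=P_i^nA_{P_i}\cap A$, so $x\in P_i$ implies $x^n\in a_iA$) and combines the generators via $\sqrt{a_1A}\cap\cdots\cap\sqrt{a_tA}=\sqrt{a_1\cdots a_tA}$.
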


\begin{proof}
Let $P_1, \ldots, P_t$ be all the minimal prime ideals of $I$, then $\sqrt{I}=P_1\cap\cdots\cap P_t$. For an each $P_i$, we show that $P_i=\sqrt{a_iA}$ for some $a_i\in A$. Since $I$ is of pure hight 1, $P_i$ is a hight 1 prime ideal of $A$. Let $K$ be the field of fractions of $A$ and $K^{\ast}$ the multiplicative group of $K$. Since $C(A)$ is torsion, $n\cdot P_i\in \divisor(K^{\ast})$ for some $n\in\mathbb{N}_{>0}$, that is, for some $a_i\in K^{\ast}$,
\[n\cdot P_i=\divisor(a_i)=\sum_{P\in\mathcal{P}}v_{P}(a_i)\cdot P.\]
It means that for $P\in\mathcal{P}$,
\[v_P(a_i)=\begin{cases}n &\text{if $P=P_i$}\\0 &\text{if $P\neq P_i$}\end{cases}.\]
Then, $a_i\in P_iA_{P_i}$ and $a_i\in A_P\setminus PA_P$ if $P\neq P_i$. In particular, $a_i\in\bigcap_{P\in\mathcal{P}}A_P$. Since the family $\lbrace A_P\mid P\in\mathcal{P}\rbrace$ of DVRs is a defining family of $A$, $a_i\in P_iA_{P_i}\cap A=P_i$. Thus, $\sqrt{a_iA}\subset P_i$. Furthermore,
\[a_iA_P=\begin{cases}P_i^nA_{P_i} &\text{if $P=P_i$}\\A_P &\text{if $P\neq P_i$}\end{cases}.\]
It follows that
\[a_iA=\bigcap_{P\in\mathcal{P}}(a_iA_P\cap A)=P_i^nA_{P_i}\cap A.\]
If $x\in P_i$, then $x^n\in a_iA$. Hence, $P_i=\sqrt{a_iA}$. By Lemma 2.3,
\[\sqrt{I}=\sqrt{a_1A}\cap\ldots\cap\sqrt{a_tA}=\sqrt{a_1\cdots a_tA}.\]
\end{proof}

\section{Theoretical tools}

In this section, we briefly summarize the Lewis's approach to Question 1.2, which forms the basis for our discussion. We restate the question as follows: Let $R$ be an LC-finite regular ring, $J$ an ideal of $R$ generated by an $R$-sequence, and $I\supset J$. Fix $i\geq1$. Does the finiteness of $\Ass H_I^{i-1}(R/J)$ imply the finiteness of $\Ass H_I^{i}(J)$?

Since both $\Ass H_I^0(R/J)$ and $\Ass H_I^1(J)$ are finite, the question is true when $i=1$. Let $j>0$ be the length of an $R$-sequence generating $J$. In the case $j=1$, as an $R$-module, $J\simeq R$ and $H_I^i(J)\simeq H_I^i(R)$. The LC-finiteness of $R$ implies that $\Ass H_I^i(J)$ is finite, and the claim is trivial. We may assume $j\geq2$. 

For a Noetherian normal ring $R$, a local ring $R_P$ at $P\in\Spec R$ is a Noetherian normal domain, hence a Krull ring. If $R_P$ is almost factorial for any $P\in\Spec R$, $R$ is said to be \textit{locally almost factorial}.

Regarding the finiteness of $\Ass H_I^i(J)$ for locally almost factorial rings, we have the following result from \cite{Lew}.

\begin{theorem}[{\cite[Theorem 2.1]{Lew}}] 
Let $R$ be a locally almost factorial Noetherian normal ring, and $I$, $J$ be ideals of $R$. The set $\Ass H_I^2(J)$ is finite.
\end{theorem}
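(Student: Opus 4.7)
My plan is to derive $\Ass H_I^2(J)$ from the long exact sequence associated to $0 \to J \to R \to R/J \to 0$. Extracting
\[
H_I^1(R/J) \xrightarrow{\delta} H_I^2(J) \to H_I^2(R),
\]
we see $H_I^2(J)$ fits in $0 \to \operatorname{im}(\delta) \to H_I^2(J) \to K \to 0$ with $K \hookrightarrow H_I^2(R)$, so that $\Ass H_I^2(J) \subset \Ass \operatorname{im}(\delta) \cup \Ass H_I^2(R)$ by the standard behavior of associated primes in a short exact sequence. Since $R/J$ is a finite $R$-module, Brodmann--Faghani gives that $\Ass H_I^1(R/J)$ is finite. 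The task then splits into (i) bounding $\Ass \operatorname{im}(\delta)$, where $\operatorname{im}(\delta)$ is a quotient of $H_I^1(R/J)$ that is additionally annihilated by $J$ (since $H_I^1(R/J)$ is an $R/J$-module and $\delta$ is $R$-linear), and (ii) bounding $\Ass H_I^2(R)$.

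The essential use of local almost factoriality is in step (ii), to show $\Ass H_I^2(R)$ is finite. First, any $P \in \Ass H_I^2(R) \subset V(I)$ satisfies $\hight(P) \geq 2$, since if $\hight(P) \leq 1$ then $R_P$ is a field or a DVR, and so $H_I^2(R)_P = H_{IR_P}^2(R_P) = 0$. Next, I would split $\sqrt{I} = I_1 \cap I_2$ where $I_1$ is the intersection of the minimal primes of $I$ of height $1$ and $I_2$ is the intersection of those of height $\geq 2$. For each $P \in \Spec R$, $I_1 R_P$ has pure height $\leq 1$ in the almost factorial ring $R_P$, so by Proposition 2.9 its radical equals $\sqrt{aR_P}$ for some $a$; hence $H_{I_1}^i(R)_P = H_{(a)}^i(R_P) = 0$ for $i \geq 2$, giving $H_{I_1}^i(R) = 0$ globally for $i \geq 2$. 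The Mayer--Vietoris sequence for $I_1$ and $I_2$ then yields
\[
H_{I_2}^2(R) \to H_I^2(R) \to H_{I_1+I_2}^3(R),
\]
reducing the finiteness of $\Ass H_I^2(R)$ to that of $\Ass H_{I_2}^2(R)$ and $\Ass H_{I_1+I_2}^3(R)$.

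The hardest step is controlling the associated primes of $H_{I_2}^2(R)$ that are embedded beyond the finitely many minimal primes of $I_2$ (each of height $\geq 2$), and analogously for $H_{I_1+I_2}^3(R)$. The plan is to further localize at each such candidate prime $Q$ and repeat the pure-height-$1$ splitting within $R_Q$, using that almost factoriality descends to localizations, so as to reduce each stalk iteratively via Proposition 2.9. A parallel analysis, exploiting the $J$-annihilation of $\operatorname{im}(\delta)$ and direct element-chasing in the long exact sequence, should handle $\Ass \operatorname{im}(\delta)$; this part is delicate because in general a quotient of a module with finite associated primes can acquire new ones, so the specific structural features of $\operatorname{im}(\delta)$ as a subquotient of $H_I^1(R/J)$ and as a $J$-torsion submodule of $H_I^2(J)$ must be used together.
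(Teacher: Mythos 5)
The paper does not actually prove this statement: it is quoted from Lewis \cite{Lew}*{Theorem 2.1}, and the machinery underlying Lewis's proof appears in Section 3 as Lemma 3.5 and Proposition 3.6. Measured against that machinery, your opening move (the long exact sequence of $0\to J\to R\to R/J\to 0$ together with Brodmann--Faghani for $H_I^1(R/J)$) is the right one, and your observation that almost factoriality forces $H_{I_1}^i(R)=0$ for $i\ge 2$ when $I_1$ has pure height one is correct and is indeed where the hypothesis enters. But the two steps you leave open are not details: as sketched, neither can be completed.

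Concretely: (i) $\operatorname{im}(\delta)$ is a \emph{quotient} of $H_I^1(R/J)$, and, as you yourself note, a quotient of a module with finitely many associated primes can acquire infinitely many new ones; no argument is given to exclude this, and the $J$-annihilation of $\operatorname{im}(\delta)$ does not help. (ii) The Mayer--Vietoris sequence bounds $\Ass H_I^2(R)$ by the associated primes of a quotient of $H_{I_2}^2(R)$ (the same quotient problem again) together with $\Ass H_{I_1+I_2}^3(R)$; the latter has no reason to be finite --- Singh and Swanson show that $H^3_{\mathfrak a}(R)$ can have infinitely many associated primes even when $R$ is a UFD --- and the plan to ``localize at each candidate prime and iterate'' is circular, since one must already know the candidates form a finite set before one can localize at each of them. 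Both defects are repaired by Proposition 3.6, which you do not invoke: writing $\sqrt{I}=\sqrt{aA}\cap I_0$ with $\depth_{I_0}(R)\ge 2$ on a \emph{finite} open cover (Lemma 3.5; pointwise localization alone does not yield global finiteness of $\Ass$) gives an honest isomorphism $H_I^i(-)\simeq H_{I_0}^i(-)_a$ for $i\ge 2$ rather than a two-term exact sequence. Since $\Ass(M_a)\subset\Ass(M)$ and $H_{I_0}^1(R)=0$ (so the connecting map for $I_0$ is injective and $\Ass H_{I_0}^2(R)$ is finite by Brodmann--Faghani), one gets $\Ass H_{I_0}^2(J)\subset\Ass H_{I_0}^1(R/J)\cup\Ass H_{I_0}^2(R)$, a finite set, and hence the finiteness of $\Ass H_I^2(J)$ on each piece of the cover. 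You should restructure the argument around this isomorphism instead of Mayer--Vietoris.
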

\begin{flushright}
$\square$
\end{flushright}

Since a regular ring is locally almost factorial, Theorem 3.1 implies that $\Ass H_I^2(J)$ is always finite in the above question. Since $\Ass H_I^1(R/J)$ is finite, the question is true when $i=2$. The assertions for $i=3, 4$ also hold under the hypothesis that $R/J$ is normal and locally almost factorial \cite[Theorem 4.2(iii)]{Lew}. The proof begins by reducing the problem to the case $\depth_I(R)\geq i-1$, using a generalized version of Hellus's isomorphism (see \cite[Theorem 4.1 and Corollary 4.3]{Lew}). Furthermore, we use the following lemma from \cite{Lew}.

\begin{lemma}[{\cite[Lemma 4.1]{Lew}}]
Let $R$ be a Noetherian ring and $I$, $J$ be ideals of $R$. Fix $i\geq1$ and assume $\Ass H_I^i(R)$ is finite. If $\depth_I(R)>i-1$, then $\Ass H_I^i(J)$ is finite if and only if $\Ass H_I^{i-1}(R/J)$ is finite.
\end{lemma}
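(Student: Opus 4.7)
The plan is to use the long exact sequence of local cohomology associated to the short exact sequence
\[
0 \to J \to R \to R/J \to 0.
\]
This yields
\[
\cdots \to H_I^{i-1}(R) \to H_I^{i-1}(R/J) \to H_I^{i}(J) \to H_I^{i}(R) \to \cdots.
\]
The hypothesis $\depth_I(R) > i-1$ combined with Theorem 2.7 gives $H_I^{i-1}(R) = 0$, so the sequence simplifies to an exact sequence
\[
0 \to H_I^{i-1}(R/J) \to H_I^{i}(J) \to H_I^{i}(R),
\]
i.e., $H_I^{i-1}(R/J)$ embeds into $H_I^{i}(J)$, and the cokernel $K$ of this embedding embeds into $H_I^{i}(R)$.

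From here the rest is a routine computation with associated primes. Writing
\[
0 \to H_I^{i-1}(R/J) \to H_I^{i}(J) \to K \to 0,
\]
the standard containment $\Ass H_I^{i}(J) \subseteq \Ass H_I^{i-1}(R/J) \cup \Ass K$ together with $\Ass K \subseteq \Ass H_I^{i}(R)$ shows that if $\Ass H_I^{i-1}(R/J)$ is finite, then $\Ass H_I^{i}(J)$ is finite, since $\Ass H_I^{i}(R)$ is finite by hypothesis. The converse direction uses only that $H_I^{i-1}(R/J)$ is a submodule of $H_I^{i}(J)$, which forces $\Ass H_I^{i-1}(R/J) \subseteq \Ass H_I^{i}(J)$, so finiteness of the latter passes to the former.

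There is no real obstacle here: the statement is essentially the observation that the depth hypothesis kills the ``previous'' term in the long exact sequence, turning a three-term fragment into a short exact sequence that squeezes $H_I^{i-1}(R/J)$ and $H_I^{i}(J)$ together up to the finite set $\Ass H_I^{i}(R)$. The only point to double-check is the direction of the connecting map and the indexing, to confirm that it is indeed $H_I^{i-1}(R)$ (and not some other term) that must vanish; this is where the hypothesis $\depth_I(R) > i-1$ is used precisely.
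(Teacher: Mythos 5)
Your proof is correct and is exactly the standard argument; the paper does not reprove this cited lemma, but the identical mechanism (the exact sequence $0\to H_I^{i-1}(R/J)\to H_I^{i}(J)\to H_I^{i}(R)$ obtained from the vanishing of $H_I^{i-1}(R)$, followed by the sandwich $\Ass H_I^{i-1}(R/J)\subset\Ass H_I^{i}(J)\subset\Ass H_I^{i-1}(R/J)\cup\Ass H_I^{i}(R)$) appears verbatim at the end of the proof of Theorem 3.3. No gaps.
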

\begin{flushright}
$\square$
\end{flushright}

By the lemma, we should view only the case $\depth_I(R)=i-1$. Since 
\[i-1=\depth_I(R)=\depth_I(R/J)+\depth_J(R),\]
and $j=\depth_J(R)\geq2$, there are the only cases 
\[
(\depth_I(R/J), \depth_J(R))=
\begin{cases} (0, 2)&\text{for $i=3$}\\(1, 2), (0, 3) &\text{for $i=4$}\end{cases}.
\]
For the case $\depth_I(R/J)=0$ and the case $\depth_I(R/J)=1$, the following results are found in the proof of \cite[Theorem 4.2]{Lew}:

\begin{theorem}[{cf. \cite[Theorem 4.2]{Lew}}]
Let $R$ be a Noetherian ring, $J$ an ideal generated by an $R$-sequence of length at least $2$, and $I$ an ideal containing $J$. Set $d=\depth_I(R)$. Suppose that $\Ass H_I^{d+1}(R)$ is finite. Then, in the following two cases, $\Ass H_I^{d+1}(J)$ is finite if and only if $\Ass H_I^{d}(R/J)$ is finite. 
\begin{enumerate}
\item In the case $\depth_I(R/J)=0$, the claim is true if $R/J$ is Cohen-Macaulay and locally a domain.
\item In the case $\depth_I(R/J)=1$, the claim is true if $R/J$ is a locally almost factorial normal ring. 
\end{enumerate}
\end{theorem}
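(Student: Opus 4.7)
The plan is to apply the long exact sequence of local cohomology to the short exact sequence $0 \to J \to R \to R/J \to 0$. Since $\depth_I(R)=d$ gives $H_I^{i}(R)=0$ for $i<d$, this produces the four-term piece
\[
H_I^{d}(R)\xrightarrow{\alpha} H_I^{d}(R/J)\xrightarrow{\beta} H_I^{d+1}(J)\xrightarrow{\gamma} H_I^{d+1}(R).
\]
Setting $A:=\operatorname{image}\alpha$ and $C:=H_I^{d}(R/J)/A\cong\operatorname{image}\beta$, the short exact sequences $0\to A\to H_I^d(R/J)\to C\to 0$ and $0\to C\to H_I^{d+1}(J)\to\operatorname{image}\gamma\to 0$, together with $\operatorname{image}\gamma\subseteq H_I^{d+1}(R)$ (which has finite $\Ass$ by hypothesis) and the standard behavior of $\Ass$ under short exact sequences, yield
\[
\Ass H_I^{d+1}(J)\text{ finite}\iff\Ass C\text{ finite},\quad \Ass H_I^d(R/J)\text{ finite}\iff\Ass A\text{ and }\Ass C\text{ both finite}.
\]
Hence in each case it suffices to show, unconditionally, that $\Ass A$ is finite.

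For Case 1, the hypotheses $\depth_I(R/J)=0$, $R/J$ Cohen--Macaulay and locally a domain force distinct minimal primes of $J$ to be pairwise comaximal (a common containing prime would make $(R/J)_P$ fail to be a domain) and give $\Ass(R/J)=\operatorname{Min}(J)=\{P_1,\ldots,P_r\}$. Reducedness and the Chinese Remainder theorem yield $R/J\cong\prod_i R/P_i$, and those summands with $P_i\supseteq I$ contribute nothing to $H_I^d(R/J)$ for $d\geq 1$ since $I\cdot R/P_i=0$ forces $H_I^{\geq 1}(R/P_i)=0$. On each remaining summand $R/P_i$, which is a CM domain with $\depth_{I(R/P_i)}(R/P_i)\geq 1$, I would control the image of $H_I^d(R)\to H_I^d(R/P_i)$ by choosing $x\in I$ simultaneously regular on $R$ and on every relevant $R/P_i$ (possible by prime avoidance, since none of the finitely many relevant primes contains $I$), and then passing to $0\to R\xrightarrow{x}R\to R/xR\to 0$ and the analogous sequence on $R/P_i$ to reduce to lower-depth data where finiteness of $\Ass$ of the image is already available.

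For Case 2, $\depth_I(R/J)=1$ with $R/J$ locally almost factorial normal allows the application of Proposition 2.10 together with the forthcoming Lemma 3.5 and Proposition 3.6 to rewrite the local cohomology of $R/J$ in more tractable form by replacing the pure-height-one part of the support of $I$ in $R/J$ by a principal ideal up to radical. Concretely, one picks $\bar{x}\in I(R/J)$ a nonzerodivisor (possible since $\depth_I(R/J)=1$) and uses the almost-factorial property to reduce $H_I^d(R/J)$ to $H_{(\bar{x})}$-type data on $R/J$, thereby forcing $\Ass A$ into a finite set.

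The hardest part will be controlling $\Ass A$ itself. The module $A$ sits simultaneously as a quotient of $H_I^d(R)$ and as a submodule of $H_I^d(R/J)$; while $H_I^d(R)$ has finite associated primes (by the classical identification $\Ass H_I^d(R)=\Ass\operatorname{Ext}_R^d(R/I,R)$ valid at $d=\depth_I(R)$), $\Ass$ does not behave well under quotients, and $H_I^d(R/J)$ carries no a priori finiteness. The case-specific hypotheses -- the comaximal splitting coming from CM plus locally-a-domain, and the principalization of pure-height-one ideals coming from almost-factoriality -- are precisely what supply the additional structure needed to make the image tame.
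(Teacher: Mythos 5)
There is a genuine gap at the very first step: your reduction to ``it suffices to show $\Ass A$ is finite'' does not work. From $0\to A\to H_I^{d}(R/J)\to C\to 0$ one gets $\Ass A\subseteq \Ass H_I^d(R/J)\subseteq \Ass A\cup\Ass C$, which gives the implication ($\Ass A$ and $\Ass C$ finite) $\Rightarrow$ ($\Ass H_I^d(R/J)$ finite), but \emph{not} the converse: $C$ is a quotient of $H_I^d(R/J)$, and finiteness of $\Ass$ does not pass to quotients. So even granting $\Ass A$ finite, the direction ``$\Ass H_I^d(R/J)$ finite $\Rightarrow$ $\Ass C$ finite $\Rightarrow$ $\Ass H_I^{d+1}(J)$ finite'' is unproved. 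You in fact flag exactly this phenomenon in your last paragraph (``$\Ass$ does not behave well under quotients'') without noticing that it invalidates your own asserted equivalence $\Ass H_I^d(R/J)$ finite $\iff$ $\Ass A$ and $\Ass C$ both finite. The paper sidesteps this entirely by proving the much stronger statement that the connecting map $\psi\colon H_I^d(R)\to H_I^d(R/J)$ is \emph{zero}, i.e.\ $A=0$, so that $0\to H_I^d(R/J)\to H_I^{d+1}(J)\to H_I^{d+1}(R)$ is exact and $\Ass H_I^d(R/J)\subseteq\Ass H_I^{d+1}(J)\subseteq\Ass H_I^d(R/J)\cup\Ass H_I^{d+1}(R)$ yields both directions at once.

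Your sketches for ``controlling $\Ass A$'' also do not supply the mechanism that actually kills $A$. The paper's argument is: writing $\sqrt{IS}=\sqrt{aS}\cap I_0S$ (Lemma 3.4 in Case 1; only locally on a cover $\{D(f_i)\}$ via Lemma 3.5 in Case 2), Proposition 3.6 gives $H_I^d(S)\simeq H_{I_0}^d(S)_a$, so $H_I^d(S)$ is an $S_a$-module; by the Hom--tensor factorization (Lemma 3.7), $\psi$ factors through $S_a\otimes_R H_I^d(R)\simeq S_a\otimes_{R_a}H_{IR_a}^d(R_a)$, and this vanishes because $\sqrt{IR_a}=\sqrt{I_0R_a}$ while $\depth_{I_0}(R_a)\geq\depth_{I_0}(R/J)+\depth_J(R)>d$. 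Your Case 1 plan (prime avoidance and descent to ``lower-depth data'') and Case 2 plan (``forcing $\Ass A$ into a finite set'') are too vague to check and, as explained above, would not suffice even if carried out, since finiteness of $\Ass A$ is not the right target. Finally, in Case 2 the principalization is only local, so one must also justify reducing to the pieces of the open cover via $\Ass H_I^j(M_{f_i})=\Ass H_I^j(M)\cap D(f_i)$; this step is absent from your outline.
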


Later, we generalize Case 2 by following Lewis's approach in \cite{Lew} and incorporating the method to remove the restriction on $\depth_I(R/J)$. For the sake of discussion, the proof of Theorem 3.3 is given below. First, we prepare preliminary results.

\begin{lemma}
Let $A$ be a Cohen-Macaulay ring with $\dim A>0$, and $I$ an ideal of $A$ with $\depth_I(A)=0$. If $A$ is locally a domain, $\sqrt{I}=\sqrt{aA}\cap I_0$ for some $a\in A$ and for some ideal $I_0$ with $\depth_{I_0}(A)>0$.
\end{lemma}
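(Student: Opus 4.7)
The plan is to exploit the hidden product structure that the hypothesis ``$A$ is locally a domain'' forces. A nilpotent element of $A$ becomes zero in each $A_P$ (each a domain), so $A$ is reduced. Moreover, two distinct minimal primes $Q_i, Q_j$ of $A$ must be coprime, for otherwise any prime $P \supseteq Q_i + Q_j$ would give $A_P$ two distinct minimal primes, contradicting that $A_P$ is a domain. By the Chinese Remainder Theorem, combined with reducedness (so $\bigcap_j Q_j = 0$), we obtain an isomorphism $A \cong B_1 \times \cdots \times B_m$, where $B_j := A/Q_j$ is a Noetherian Cohen--Macaulay domain.

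Under the product decomposition, write $I = I_1 \times \cdots \times I_m$ with each $I_j$ an ideal of $B_j$. Since $A$ is Cohen--Macaulay, $\Ass A = \{Q_1, \ldots, Q_m\}$, so the hypothesis $\depth_I(A) = 0$ forces $I \subseteq Q_j$ for some $j$ by prime avoidance; equivalently, $I_j = 0$ for at least one index. Set $S := \{j : I_j = 0\} \neq \emptyset$ and $T := \{1, \ldots, m\} \setminus S$.

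The construction is then to take $a := \sum_{j \in T} e_j \in A$, where $e_j$ is the idempotent projecting onto the $j$-th factor, and let $I_0$ have $j$-th component equal to $B_j$ for $j \in S$ and to $\sqrt{I_j}$ for $j \in T$. A direct product-coordinate calculation gives $aA = \bigcap_{j \in S} Q_j$, which is already radical, and, using $\sqrt{I_j} = 0$ for $j \in S$, yields $\sqrt{aA} \cap I_0 = \sqrt{I}$. For the depth, the element of $I_0$ whose $j$-th coordinate is $1$ for $j \in S$ and a nonzero element of $\sqrt{I_j}$ for $j \in T$ (available because $I_j \neq 0$ in the domain $B_j$) has every coordinate a nonzerodivisor, so is a nonzerodivisor on $A$, giving $\depth_{I_0}(A) > 0$. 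The only nontrivial step is establishing the product decomposition of $A$; after that, everything reduces to routine bookkeeping with ideals in a finite product of domains, and I anticipate no serious technical obstacle.
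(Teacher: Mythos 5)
Your proof is correct and follows essentially the same route as the paper: both arguments rest on the decomposition $A\simeq A/P_1\times\cdots\times A/P_t$ forced by the locally-domain hypothesis, and your $a=\sum_{j\in T}e_j$ and coordinatewise $I_0$ coincide with the paper's $a=\prod_{i\in S}(1-e_i)$ and its intersection of the height-positive minimal primes of $I$. The one substantive difference is that you verify $\depth_{I_0}(A)>0$ by exhibiting an explicit nonzerodivisor (coordinatewise nonzero in each domain factor), whereas the paper invokes the Cohen--Macaulay equality $\depth_{I_0}(A)=\hight I_0$; your variant shows the Cohen--Macaulay hypothesis is not actually needed for this lemma.
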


\begin{proof}
Since $\hight I=\depth_I(A)=0$, we can write $\sqrt{I} =J\cap I_0$, in which $J$ is a finite intersection of minimal prime ideals of $A$, and $I_0$ is a finite intersection of height $>0$ prime ideals. Thus, $\depth_{I_0}(A)=\hight I_0>0$. To see that $J$ is a radical of some principal ideal of $A$, we show that every minimal prime ideal of $A$ is a radical of some principal ideal of $A$. Let $P_1,\ldots, P_t$ be all the minimal prime ideals of $A$. Since $A$ is Noetherian and locally a domain, 
\[A\simeq A/P_1\times\cdots\times A/P_t.\]
Let $e_i\in A$ be the element corresponding to ``the $i$-th standard idempotent'' $(0,\ldots, 0, 1, 0,\ldots,0)\in A/P_1\times\cdots\times A/P_t$. Each $P_i$ corresponds to the ideal 
\[A/P_1\times\cdots\times A/P_{i-1}\times (0)\times A/P_{i+1}\times\cdots\times A/P_t.\]
Thus, $e_i\notin P_i$, and $e_j\in P_i$ if $j\neq i$. In addition, $e_1+\cdots+e_t=1$, since this element corresponds to $(1,\ldots, 1)$. Hence, $1-e_i\in P_i$, and it follows that $P_i$ is the only minimal element of $V((1-e_i)A)$. Indeed, if not, there exists $P\in V((1-e_i)A)$ such that $P\supset P_j$ for some $j\neq i$. Then, $1-e_i, e_i\in P$, and $1\in P$, yielding a contradiction. Since $J$ is a finite intersection of minimal prime ideals, we see that $J$ is of the form,
\[J=\bigcap_i\sqrt{a_iA}=\sqrt{aA},\quad\text{where $a=\prod_ia_i$}.\]
\end{proof}

In the case where the depth equals $1$, we have the following result from \cite{Lew}.

\begin{lemma}[{\cite[Corollary 2.1]{Lew}}]
Let $A$ be a locally almost factorial Noetherian normal ring, and $I$ an ideal of $A$ such that $\depth_I(A)=1$. Then, there is an ideal $I_0$ of $A$ such that 
\[\depth_{I_0}(A)>1, \]
and a finite open covering $\lbrace\Spec(A_{f_1}),\ldots,\Spec(A_{f_t})\mid f_1,\ldots, f_t\in A\rbrace$ of $\Spec A$ such that for each $i=1,\ldots, t$,
\[\sqrt{IA_{f_i}}=\sqrt{a_iA_{f_i}}\cap I_0A_{f_i}\quad\text{for some $a_i\in A$}.\]
\end{lemma}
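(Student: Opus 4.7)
The plan is to decompose $\sqrt{I}$ into a pure-height-$1$ part, which becomes a radical of a principal ideal \emph{locally} via Proposition 2.7, and a higher-height part that will serve as $I_0$; the local equalities are then spread out and patched together using quasi-compactness of $\Spec A$. First I would use that a Noetherian normal ring satisfies Serre's condition $S_2$, so that $\depth_I(A) = 1$ forces $\hight I = 1$. Let $P_1, \ldots, P_m$ be the height-$1$ minimal primes of $\sqrt{I}$ and $Q_1, \ldots, Q_n$ the remaining minimal primes (all of height $\geq 2$). Setting $K = P_1 \cap \cdots \cap P_m$ and $I_0 = Q_1 \cap \cdots \cap Q_n$ (with $I_0 = A$ if $n = 0$), I obtain $\sqrt{I} = K \cap I_0$. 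Since $\hight I_0 \geq 2$, the $S_2$ condition supplies an $A$-sequence of length $2$ in $I_0$, hence $\depth_{I_0}(A) \geq 2 > 1$.

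Next, for each $Q \in \Spec A$, the ideal $K A_Q$ is either the unit ideal or has pure height $1$: its minimal primes are precisely the $P_i A_Q$ with $P_i \subseteq Q$, each of height $1$ in the almost factorial local ring $A_Q$. Applying Proposition 2.7 to $A_Q$ then furnishes an element $c \in A_Q$ with $\sqrt{K A_Q} = \sqrt{c A_Q}$. Clearing denominators, I may take $c = a \in A$, so that $\sqrt{K A_Q} = \sqrt{a A_Q}$.

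The main obstacle is the spreading-out step: upgrading this equality from $A_Q$ to a Zariski open neighborhood of $Q$. Writing $K = (g_1, \ldots, g_k)$, the finitely many inclusions $g_j^{N_j} \in a A_Q$ and $a^M \in K A_Q$ (for suitable exponents $N_j$, $M$) hold in $A_Q$; after clearing the finitely many denominators from $A \setminus Q$ appearing in the witnessing identities, they already hold in $A_{f_Q}$ for an appropriate element $f_Q \notin Q$. Hence $\sqrt{K A_{f_Q}} = \sqrt{a A_{f_Q}}$. The sets $\{D(f_Q) : Q \in \Spec A\}$ form an open cover of $\Spec A$, so by quasi-compactness a finite subcover $D(f_1), \ldots, D(f_t)$ suffices.

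Finally, since $K$ and $I_0$ are both radical ideals and localization commutes with taking radicals and with finite intersections, localizing $\sqrt{I} = K \cap I_0$ at each $f_i$ yields
\[
\sqrt{I A_{f_i}} \;=\; K A_{f_i} \cap I_0 A_{f_i} \;=\; \sqrt{a_i A_{f_i}} \cap I_0 A_{f_i},
\]
which is the desired decomposition.
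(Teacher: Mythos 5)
The paper does not actually prove this lemma --- it is imported as \cite{Lew}*{Corollary 2.1} without an in-text argument --- so there is no proof of record to compare against; judged on its own, your proof is correct and follows exactly the route the paper signals by introducing Proposition 2.7 ``(see Lemma 3.5)''. The key steps all check out: Serre's condition $S_2$ for the normal ring $A$ gives both $\hight I=1$ (so the decomposition $\sqrt{I}=K\cap I_0$ into the pure-height-one part and the height-$\geq 2$ part exists, with $\depth_{I_0}(A)\geq 2$ via $\grade(I_0,A)=\min_{P\in V(I_0)}\depth A_P$), Proposition 2.7 applied to the almost factorial local rings $A_Q$ makes $K$ principal up to radical locally, and the spreading-out of the finitely many witnessing relations $g_j^{N_j}\in aA_Q$, $a^M\in KA_Q$ to a principal open neighborhood, followed by quasi-compactness of $\Spec A$, is the standard and correct way to globalize.
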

\begin{flushright}
$\square$
\end{flushright}

The following result from \cite{Lew} is useful when computing the local cohomology modules of ideals of a locally almost factorial ring.
\begin{proposition}[{\cite[Corollary 2.2]{Lew}}]
Let $A$ be a Noetherian ring, $a\in A$, and $I_0$ an ideal of $A$. Let $I=\sqrt{aA}\cap I_0$. There is a natural isomorphism of functors $H_I^i(-)\simeq H_{I_0}^i(-)_a$ for all $i\geq2$.
\end{proposition}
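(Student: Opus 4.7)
The plan is to compute $H_I^i(-)$ directly from the \v{C}ech complex and identify it, in degrees $\geq 2$, with a \v{C}ech complex on $M_a$. First I would establish $\sqrt{I} = \sqrt{aI_0}$: the standard identity $\sqrt{J_1\cap J_2} = \sqrt{J_1}\cap\sqrt{J_2}$ gives $\sqrt{I} = \sqrt{aA}\cap\sqrt{I_0}$, and then Lemma 2.3 yields $\sqrt{aA}\cap\sqrt{I_0} = \sqrt{aA\cap I_0} = \sqrt{aI_0}$.

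Since $R$ is Noetherian, write $I_0 = (x_1,\ldots,x_n)$, so $\sqrt{I} = \sqrt{(ax_1,\ldots,ax_n)}$ and thus $H_I^i(M)$ is computed as the $i$-th cohomology of
\[
C^\bullet := \check{C}^\bullet(ax_1,\ldots,ax_n;M).
\]
For each nonempty $S\subseteq\{1,\ldots,n\}$, inverting $\prod_{j\in S}(ax_j) = a^{|S|}\prod_{j\in S}x_j$ is the same as inverting $a$ together with each $x_j$ for $j\in S$, so there is a canonical identification
\[
M_{\prod_{j\in S}(ax_j)} = (M_a)_{\prod_{j\in S}x_j}.
\]
Consequently, in each degree $k\geq 1$ the term $C^k$ coincides with the degree-$k$ term of $D^\bullet := \check{C}^\bullet(x_1,\ldots,x_n;M_a)$, and since the \v{C}ech face maps in these degrees are pure localization maps, they match under this identification. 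The degree-$0$ terms differ---$C^0=M$ while $D^0=M_a$---but the natural localization $M\to M_a$ assembles with the identities in higher degrees into a chain map $C^\bullet\to D^\bullet$ that is an isomorphism in degrees $\geq 1$.

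For $i\geq 2$, the cohomology $H^i$ is determined by the terms in degrees $i-1,i,i+1$, all of which lie in degrees $\geq 1$. Hence
\[
H_I^i(M) = H^i(C^\bullet) = H^i(D^\bullet) = H_{I_0}^i(M_a) = H_{I_0}^i(M)_a,
\]
where the final equality uses flatness of $R\to R_a$ (Proposition 2.5). Every step---forming the \v{C}ech complex, identifying the localizations, and base change along $R\to R_a$---is functorial in $M$, so the isomorphism is natural.

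The main work is the bookkeeping to check that the \v{C}ech face maps and their signs genuinely agree under the identification $M_{a^{|S|}\prod x_j} = (M_a)_{\prod x_j}$; once this is verified, the isomorphism drops out. This \v{C}ech-based route is more direct than a Mayer--Vietoris approach: applied to the pair $(aA,I_0)$, M--V only yields $H_I^i(M)_a\simeq H_{I_0}^i(M)_a$ and would then require a separate argument that $a$ acts invertibly on $H_I^i(M)$ for $i\geq 2$ before one could remove the localization on the left.
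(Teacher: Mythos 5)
Your argument is correct. The paper itself gives no proof of this proposition (it is quoted from Lewis, \cite[Corollary 2.2]{Lew}), but the \v{C}ech comparison you describe is exactly the standard argument behind that result: reduce to $\sqrt{I}=\sqrt{(ax_1,\ldots,ax_n)}$, identify $M_{\prod_{j\in S}(ax_j)}$ with $(M_a)_{\prod_{j\in S}x_j}$ so that the two complexes agree in degrees $\geq 1$, and conclude for $i\geq 2$; the final identification $H^i_{I_0}(M_a)\simeq H^i_{I_0}(M)_a$ via flat base change (Propositions 2.4 and 2.5) and the naturality claim are both in order.
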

\begin{flushright}
$\square$
\end{flushright}

\begin{lemma}
Let $\psi : M\to N$ be a homomorphism of $A$-modules, and $B$ an $A$-algebra. If $N$ also has a $B$-module structure, then $\psi$ factors through the natural map $M\to B\otimes_AM$, $x\mapsto 1\otimes x$:
\[M\xrightarrow{\psi}N\quad=\quad M\to B\otimes_AM\to N.\]
\end{lemma}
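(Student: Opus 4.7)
The plan is to construct the factoring map $\tilde\psi : B \otimes_A M \to N$ explicitly using the $B$-module structure on $N$, and then verify the two required properties: that it is a well-defined homomorphism, and that the composition with the canonical map $x \mapsto 1 \otimes x$ recovers $\psi$.

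First I would set up the biadditive map $B \times M \to N$ given by $(b, x) \mapsto b \cdot \psi(x)$, where the scalar multiplication on the right is the given $B$-action on $N$. To descend this to the tensor product $B \otimes_A M$, one must check $A$-balancedness, i.e.\ that $(ab) \cdot \psi(x) = b \cdot \psi(a x)$ for every $a \in A$, $b \in B$, $x \in M$. This follows from two compatibilities: $\psi$ is $A$-linear, so $\psi(ax) = a \psi(x)$, and the $A$-module structure on $N$ obtained by restriction along the structure map $A \to B$ coincides with the $A$-action coming from the $B$-module structure, so $a \psi(x) = (a \cdot 1_B) \psi(x)$ and multiplication by elements of $A$ commutes with multiplication by elements of $B$.

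The universal property of the tensor product then yields an $A$-module homomorphism $\tilde\psi : B \otimes_A M \to N$ characterized by $\tilde\psi(b \otimes x) = b \cdot \psi(x)$. Composing with the canonical map $M \to B \otimes_A M$ gives
\[
x \;\longmapsto\; 1 \otimes x \;\longmapsto\; 1 \cdot \psi(x) \;=\; \psi(x),
\]
which establishes the claimed factorization.

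There is no genuine obstacle in this argument; the statement is essentially the adjunction between restriction and extension of scalars along $A \to B$, and the only thing to verify is the $A$-balancedness mentioned above. The lemma is being recorded here simply because the factorization will be applied later to specific local cohomology maps, where the relevant $B$ will typically be a localization of $A$.
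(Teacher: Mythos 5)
Your proof is correct and is essentially the paper's argument in unwound form: the paper invokes the Hom--Tensor adjunction $\Hom_B(B\otimes_AM,N)\simeq\Hom_A(M,N)$ and traces the isomorphisms to produce $\tilde\psi(b\otimes x)=b\cdot\psi(x)$, while you construct the same map directly from the $A$-balanced map $(b,x)\mapsto b\cdot\psi(x)$ via the universal property of the tensor product. The balancedness check you carry out is exactly the content hidden in the paper's appeal to adjointness, so the two proofs coincide in substance.
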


\begin{proof}
By Hom-Tensor adjointness,
\[
\Hom_B(B\otimes_AM, N)\simeq\Hom_A(M, \Hom_B(B, N))\simeq\Hom_A(M, N). 
\]
Under the first isomorphism, $\tilde{f} : B\otimes_AM\to N$ corresponds to the map
\[F : M\to\Hom_B(B, N),\quad x\mapsto (F_x \colon B\ni b\mapsto \tilde{f}(b\otimes x)).
\]
The second isomorphism is induced by 
\[\Hom_B(B, N)\simeq N,\quad g\mapsto g(1).
\]
Under the composition of these isomorphisms, $\tilde{f} \colon B\otimes_AM\to N$ corresponds to the map $f \colon M\to N, x\mapsto \tilde{f}(1\otimes x)$. Hence, there is a map $\tilde{\psi} \colon B\otimes_AM\to N$ such that 
\[M\xrightarrow{\psi}N\quad=\quad M\to B\otimes_AM\xrightarrow{\tilde{\psi}} N.
\]
\end{proof}

\begin{proof}[Proof of Theorem 3.3]
Set $S=R/J$. In Case 1, by Lemma 3.4, 
\[\sqrt{I/J}=\sqrt{a(R/J)}\cap (I_0/J)\] 
for some $a\in R$ and for some ideal $I_0\supset J$ of $R$ with $\depth_{I_0}(R/J)>0$. Then, $\sqrt{IS}=\sqrt{aS\cap I_0S}$ and $H_I^i(S)=H_{aS\cap I_0S}^i(S)$. By Proposition 3.6, \text{$H_I^i(S)\simeq H_{I_0}^i(S)_a$} for all $i\geq 2$. Now, 
\[d=\depth_I(R)=\depth_I(R/J)+\depth_J(R)=0+\depth_J(R)\geq2,\]
and $H_I^{d}(S)$ has a $S_a$-module structure. The natural map $\psi \colon H_I^{d}(R)\to H_I^{d}(S)$ factors through $S_a\otimes_RH_I^{d}(R)$ by Lemma 3.7. We want to show $\psi=0$. It suffices to show that $S_a\otimes_RH_I^{d}(R)=0$. Since
\[S_a\otimes_RH_I^{d}(R)\simeq S_a\otimes_{R_a}R_a\otimes_RH_I^{d}(R)\simeq S_a\otimes_{R_a}H_{IR_a}^{d}(R_a),\]
it suffices to show $H_{IR_a}^{d}(R_a)=0$. The following two equalities hold: 
\[
\sqrt{I}=\sqrt{(aR+J)\cap I_0},\quad\sqrt{IR_a}=\sqrt{(aR+J)R_a\cap I_0R_a}=\sqrt{I_0R_a}.
\]



By the second equality, $H_{IR_a}^{d}(R_a)\simeq H_{I_0R_a}^{d}(R_a)$. Now $J$ is generated by an $R$-sequence and contained in $I_0$,
\[
\depth_{I_0R_a}(R_a)\geq\depth_{I_0}(R)=\depth_{I_0}(R/J)+\depth_J(R)> d.
\]
Therefore, $H_{IR_a}^{d}(R_a)=0$ and $\psi =0$. From the long exact sequence of a local cohomology,
\[0\to H_I^d(S)\to H_I^{d+1}(J)\to H_I^{d+1}(R)\]
is exact, and
\[\Ass H_I^d(S)\subset\Ass H_I^{d+1}(J)\subset\Ass H_I^d(S)\cup\Ass H_I^{d+1}(R).\]
By the assumption $\Ass H_I^{d+1}(R)$ is finite, the claim follows.

In Case 2, by Lemma 3.5, there is an ideal $I_0$ of $R$,
\[I_0\supset J\quad\text{with $\depth_{I_0}(R/J)>1$},\] 
and a finite open covering of $\Spec S$, 
\[\lbrace\Spec(S)\cap D(f_1),\ldots,\Spec(S)\cap D(f_t)\mid f_1,\ldots, f_t\in R\rbrace\] 
such that for each $i$,
\[
\sqrt{IR_{f_i}/JR_{f_i}}=\sqrt{a_i(R_{f_i}/JR_{f_i})}\cap (I_0R_{f_i}/JR_{f_i})\quad\text{for some $a_i\in R$},
\]
where $D(f_i) = \Spec R_{f_i}$ is the principal open subset of $\Spec R$. For any $R$-module $M$, $\Ass H_I^j(M)\subset V(I)\subset V(J) = \Spec S$. In addition, for each $i$,
\[\Ass H_I^j(M_{f_i})=\Ass H_I^j(M)\cap D(f_i).\]
It follows that $\Ass H_I^j(M)$ is finite if and only if $\Ass H_I^j(M_{f_i})$ is finite for all $i=1,\ldots, t$. Then, we should prove that for all $i=1,\ldots, t$,
\[
\Ass H_I^{d}(S_{f_i})\text{ is finite}\Leftrightarrow\Ass H_I^{d+1}(JR_{f_i})\text{ is finite}.
\]
Now, $d=1+\depth_J(R)\geq3$. Replacing $R$ by $R_{f_i}$ and $a$ by $a_i$ in the discussion of Case 1, we see that it suffices to show that $H_{I(R_{f_i})_{a_i}}^d((R_{f_i})_{a_i})=0$, and we can also conclude $\sqrt{I(R_{f_i})_{a_i}}=\sqrt{I_0(R_{f_i})_{a_i}}$. Then,
\[H_{I(R_{f_i})_{a_i}}^d((R_{f_i})_{a_i})\simeq H_{I_0(R_{f_i})_{a_i}}^d((R_{f_i})_{a_i})=H_{I_0}^d((R_{f_i})_{a_i}).
\]
It follows that $H_{I(R_{f_i})_{a_i}}^d((R_{f_i})_{a_i})=0$, since
\[\depth_{I_0}((R_{f_i})_{a_i})\geq\depth_{I_0}(R)=\depth_{I_0}(R/J)+\depth_J(R)>d.
\]   
\end{proof}

\begin{theorem}[{\cite[Theorem 4.2]{Lew}}]
Let $R$ be an LC-finite regular ring, $J$ an ideal generated by an $R$-sequence of length at least $2$. Let $I$ be an ideal of $R$ containing $J$. Suppose that $R/J$ is normal and locally almost factorial. Then, for any $i\leq4$, $\Ass H_I^i(J)$ is finite if and only if $\Ass H_I^{i-1}(R/J)$ is finite.
\end{theorem}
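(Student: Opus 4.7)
The plan is to split on the value of $i$ and reduce each case to results already in the excerpt. For $i = 1$ and $i = 2$, both sides of the equivalence are always finite: $H_I^0(R/J)$ is finitely generated, $\Ass H_I^1(M)$ is finite for every finite $R$-module $M$ by Brodmann--Faghani (applied to $M = J$ and $M = R/J$), and $\Ass H_I^2(J)$ is finite by Theorem 3.1 applied to the regular, hence locally almost factorial, ring $R$. So these two cases are trivial and do not use the hypothesis on $R/J$.

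For $i = 3$ and $i = 4$, I first reduce to the case $\depth_I(R) = i - 1$. When $\depth_I(R) > i - 1$, Lemma 3.2 together with the LC-finiteness of $R$ gives the equivalence immediately. When $\depth_I(R) < i - 1$, I invoke the generalized Hellus isomorphism that the beginning of the section credits with reducing Question 1.2 to Question 1.3 (cf.\ \cite[Corollary 4.3]{Lew}). Once $\depth_I(R) = i - 1$, the identity $\depth_I(R) = \depth_I(R/J) + \depth_J(R)$ combined with $\depth_J(R) \geq 2$ forces $\depth_I(R/J) = 0$ when $i = 3$, and $\depth_I(R/J) \in \lbrace 0, 1 \rbrace$ when $i = 4$, matching precisely the subcases of Theorem 3.3.

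In the subcases with $\depth_I(R/J) = 0$, I apply Case 1 of Theorem 3.3. To verify its hypotheses I note that $R/J$ is locally a domain (because it is normal) and is Cohen--Macaulay because for every prime $P \supset J$ the $R$-sequence generating $J$ remains an $R_P$-sequence, and $R_P$ is a regular local ring, whence $R_P/JR_P$ is Cohen--Macaulay. In the remaining subcase $\depth_I(R/J) = 1$ (which occurs only when $i = 4$), Case 2 of Theorem 3.3 applies directly, since $R/J$ is locally almost factorial and normal by assumption. I expect the most delicate ingredient to be the depth reduction when $\depth_I(R) < i - 1$, as it is the only input not fully developed in the preceding material of the excerpt; everything else amounts to a routine bookkeeping combining Lemma 3.2, Theorem 3.3, and the additivity of depth along a regular sequence.
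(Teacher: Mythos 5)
Your proposal is correct and follows essentially the same route as the paper: dispose of $i\le 2$ directly, reduce to $\depth_I(R)=i-1$ via Lemma 3.2 and the cited generalized Hellus isomorphism, use additivity of depth to list the cases $(\depth_I(R/J),\depth_J(R))=(0,2),(0,3),(1,2)$, and conclude by Theorem 3.3 after checking that $R/J$ is Cohen--Macaulay and locally a domain. The only difference is that you spell out the Cohen--Macaulay verification locally at each prime, which the paper states in one line; the reliance on \cite[Corollary 4.3]{Lew} for the case $\depth_I(R)<i-1$ is exactly how the paper proceeds as well.
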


\begin{proof}
As we have already seen at the beginning of this section, we only need to consider the following cases, where $\depth_I(R/J)=0, 1$: 
\[
(\depth_I(R/J), \depth_J(R))=(0, 2), (0, 3), (1, 2).
\]
$R/J$ is Cohen-Macaulay, since $R$ is regular and $J$ is generated by an $R$-sequence. In addition, by the assumption that $R/J$ is normal, $R/J$ is locally a domain. For all of the cases above, the claim holds by Theorem 3.3.
\end{proof}

\section{Main results}
In this section, we consider the case $\depth_I(R/J)\geq2$. To handle this case in accordance with Lewis's approach, we assume the existence of a suitable $R/J$-sequence in $I$. The assumption in the following statement coincides the assumption in Case 2 of Theorem 3.3, when $\depth_I(R/J)=1$:

\begin{proposition}
Let $R$ be a Noetherian ring, $J$ an ideal generated by an $R$-sequence of length at least $2$. Set $S=R/J$. Let $I$ be an ideal containing $J$ such that $e=\depth_I(R/J)\geq2$. Suppose that there is an $R/J$-sequence $x_1,\ldots, x_{e-1}\in I$ such that $R/(x_1,\ldots, x_{e-1}, J)$ is a locally almost factorial normal ring. Then, there is an ideal $I_0\supset (x_1,\ldots, x_{e-1}, J)$ of $R$ such that 
\[\depth_{I_0}(S)>e\]
and a finite open covering $\lbrace V(I)\cap D(f_1),\ldots,\ V(I)\cap D(f_t)\mid f_1,\ldots, f_t\in R\rbrace$ of $V(I)$ such that for each $i=1,\ldots, t$
\[\sqrt{IS_{f_i}}=\sqrt{(x_1,\ldots, x_{e-1}, y_i)S_{f_i}}\cap I_0S_{f_i}=\sqrt{y_iS_{f_i}\cap I_0S_{f_i}+(x_1,\ldots, x_{e-1})S_{f_i}}\]
for some $y_i\in R$. Where, $D(f_i)$ is the principal open subset of $\Spec R$. 
\end{proposition}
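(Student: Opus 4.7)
The plan is to reduce the statement to the $e=1$ case, which is Lemma 3.5, by killing the $S$-regular sequence $x_1,\ldots,x_{e-1}$. First I set
\[T = S/(x_1,\ldots,x_{e-1})S = R/(x_1,\ldots,x_{e-1},J),\]
which by hypothesis is a locally almost factorial normal ring. Since $x_1,\ldots,x_{e-1}$ is an $S$-regular sequence in $I$, the standard depth formula gives $\depth_{IT}(T) = e - (e-1) = 1$, so Lemma 3.5 applies to the ideal $IT$ of $T$. This produces an ideal $\bar{I_0} \subset T$ with $\depth_{\bar{I_0}}(T) > 1$, principal opens $\{\Spec T_{\bar{f_i}}\}_{i=1}^t$ covering $\Spec T$ (with $\bar{f_i}$ the image of some $f_i \in R$), and elements $\bar{a_i} \in T$, which I lift to $y_i \in R$, such that
\[\sqrt{IT_{\bar{f_i}}} = \sqrt{\bar{a_i} T_{\bar{f_i}}} \cap \bar{I_0} T_{\bar{f_i}}\]
for every $i$.

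Next, I take $I_0 \subset R$ to be the preimage of $\bar{I_0}$ under $R \twoheadrightarrow T$; this gives $I_0 \supset (x_1,\ldots,x_{e-1},J)$ and $I_0 T = \bar{I_0}$. Since $x_1,\ldots,x_{e-1}$ is an $S$-regular sequence lying in $I_0$, the depth formula yields
\[\depth_{I_0}(S) = (e-1) + \depth_{\bar{I_0}}(T) > e.\]
For the covering, $V(I) \subset V((x_1,\ldots,x_{e-1},J)) = \Spec T$ as subsets of $\Spec R$ because $I \supset (x_1,\ldots,x_{e-1},J)$, so intersecting the covering of $\Spec T$ by the $D(f_i)$ with $V(I)$ supplies the required covering $\{V(I) \cap D(f_i)\}_{i=1}^t$ of $V(I)$.

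To establish the two radical equalities, I apply the surjection $\pi\colon S_{f_i} \twoheadrightarrow T_{\bar{f_i}}$, whose kernel is $(x_1,\ldots,x_{e-1})S_{f_i}$, to the Lemma 3.5 relation. The preimage of $\sqrt{IT_{\bar{f_i}}}$ is $\sqrt{IS_{f_i} + (x_1,\ldots,x_{e-1})S_{f_i}} = \sqrt{IS_{f_i}}$ since $x_j \in I$; the preimage of $\sqrt{\bar{a_i}T_{\bar{f_i}}}$ is $\sqrt{(x_1,\ldots,x_{e-1},y_i)S_{f_i}}$; and the preimage of $\bar{I_0}T_{\bar{f_i}} = I_0 T_{\bar{f_i}}$ is $I_0 S_{f_i} + (x_1,\ldots,x_{e-1})S_{f_i} = I_0 S_{f_i}$ since $(x_1,\ldots,x_{e-1}) \subset I_0$. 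Intersecting preimages on the right-hand side yields the first equality. The second equality then follows from the ideal identity
\[(x_1,\ldots,x_{e-1},y_i)S_{f_i} \cap I_0 S_{f_i} = y_i S_{f_i} \cap I_0 S_{f_i} + (x_1,\ldots,x_{e-1})S_{f_i},\]
which holds because $(x_1,\ldots,x_{e-1}) \subset I_0$: any element of the left side, written as $\sum c_j x_j + c y_i$, has $c y_i = (\sum c_j x_j + c y_i) - \sum c_j x_j \in I_0 S_{f_i}$, placing it in the right side.

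The main obstacle I anticipate is ensuring that the radical manipulations survive the pullback $\pi$, since $\sqrt{A} \cap B$ need not be radical when $B$ is not. The cleanest remedy is to choose $\bar{I_0}$ in Lemma 3.5 to be radical (which is natural, as Lewis's construction takes it to be a finite intersection of height-$>1$ primes); then $I_0$ is radical, its flat extension $I_0 S_{f_i}$ is radical, and the various intersections of radicals align with the desired right-hand sides. Once this bookkeeping is in place, the conceptual work is entirely carried by Lemma 3.5 and the rest of the argument is formal.
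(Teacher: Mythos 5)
Your proposal is correct and follows essentially the same route as the paper: reduce modulo the $S$-sequence $x_1,\ldots,x_{e-1}$ to the ring $R/(x_1,\ldots,x_{e-1},J)$ where $\depth$ drops to $1$, invoke Lemma 3.5 there, and pull the resulting data back along the surjection. Your preimage formalism for the first equality and your exact ideal identity for the second are just cleaner packagings of the paper's element chases, and the radical bookkeeping you flag is genuine but is resolved either by your remedy (taking $I_0$ radical) or, without any modification of Lemma 3.5, by noting that the first equality already exhibits $\sqrt{(x_1,\ldots,x_{e-1},y_i)S_{f_i}}\cap I_0S_{f_i}$ as the radical ideal $\sqrt{IS_{f_i}}$.
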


\begin{proof}
With the notation $\underline{x}=(x_1,\ldots, x_{e-1})$, the following equality holds:
\[\depth_I(R/(\underline{x}, J))=\depth_I(R/J)-(e-1)=1\]
By Lemma 3.5, there is an ideal $I_0\supset (\underline{x}, J)$ of $R$ such that $\depth_{I_0}(R/(\underline{x}, J))>1$, and a finite open covering of $\Spec R/(\underline{x}, J)$
\[
\lbrace\Spec(R/(\underline{x}, J))\cap D(f_1),\ldots, \Spec(R/(\underline{x}, J))\cap D(f_t)\mid f_1,\ldots, f_t\in R\rbrace
\]
such that for each $i=1,\ldots, t$
\[
\sqrt{I(R/(\underline{x}, J))_{f_i}}=\sqrt{y_i(R/(\underline{x}, J))_{f_i}}\cap I_0(R/(\underline{x}, J))_{f_i}\quad\text{for some $y_i\in R$}.
\]
Then, $\depth_{I_0}(S)=(e-1)+\depth_{I_0}(R/(\underline{x}, J))>e$. The above formula is rewritten in the following form: $\sqrt{IS_{f_i}/\underline{x}S_{f_i}}=\sqrt{y_i(S_{f_i}/\underline{x}S_{f_i})}\cap (I_0S_{f_i}/\underline{x}S_{f_i})$. From this, we derive the first equality in the claim. Set $S_i=S_{f_i}$. Let $a\in\sqrt{IS_i}$, then,  $a \mod \underline{x}S_i\in\sqrt{IS_{f_i}/\underline{x}S_{f_i}}=\sqrt{y_i(S_i/\underline{x}S_i)}\cap (I_0S_i/\underline{x}S_i)$. It follows that $a\in I_0S_i$, and $a^n-y_ib\in\underline{x}S_i$ for some $n>0$ and $b\in S_i$. Thus, $a\in\sqrt{(\underline{x}, y_i)S_i}\cap I_0S_i$. Conversely, let $a\in\sqrt{(\underline{x}, y_i)S_i}\cap I_0S_i$. Then, $a \mod \underline{x}S_i\in\sqrt{y_i(S_i/\underline{x}S_i)}\cap (I_0S_i/\underline{x}S_i)=\sqrt{IS_i/\underline{x}S_i}$, and $a^n\in IS_i+\underline{x}S_i$ for some $n>0$. Since $(x_1,\ldots, x_{e-1})\subset I$, $a\in\sqrt{IS_i}$. Therefore, $\sqrt{IS_i}=\sqrt{(\underline{x}, y_i)S_i}\cap I_0S_i$.

Next, we prove the second equality in the claim, which, with the notation $S_i=S_{f_i}$, takes the following form:
\[\sqrt{(\underline{x},y_i)S_i}\cap I_0S_i=\sqrt{y_iS_i\cap I_0S_i+\underline{x}S_i}.
\]
Taking the radical of both ideals, it suffices to show that 
\[\sqrt{(\underline{x},y_i)S_i\cap I_0S_i}=\sqrt{y_iS_i\cap I_0S_i+\underline{x}S_i}.\]
For any $a$ in the left-hand side, $a^n=x_1b_1+\cdots+x_{e-1}b_{e-1}+y_ic\in I_0S_i$ for some $n>0$ and $b_1,\ldots, b_{e-1}, c\in S_i$. Here, $y_ic\in y_iS_i\cap I_0S_i$, since $x_1,\ldots x_{e-1}\in I_0$. Then, \text{$a^n\in y_iS_i\cap I_0S_i+\underline{x}S_i$}. Conversely, for any $a$ in the right-hand side, $a^n=x_1b_1+\cdots+x_{e-1}b_{e-1}+y_ic$ for some $n>0$, where $b_1,\ldots, b_{e-1}, c\in S_i$, and $y_ic\in I_0S_i$. Then, $a^n\in (\underline{x}, y_i)S_i\cap I_0S_i$, since $x_1, \ldots, x_{e-1}\in I_0$.

From the above, we have shown both the inequality $\depth_{I_0}(S)>e$ and the equality within the claim. Furthermore, since $\Spec R/(\underline{x}, J)\supset V(I)$,
\[\lbrace V(I)\cap D(f_1),\ldots,\ V(I)\cap D(f_t)\mid f_1,\ldots, f_t\in R\rbrace\]
is an open covering of $V(I)$, as desired.
\end{proof}

The relationship of the ideal's radical given in the proposition shows, through the form of the equation $\sqrt{IS_i}=\sqrt{y_iS_i\cap I_0S_i+(x_1,\ldots, x_{e-1})S_i}$, how far the structure of $IS_i$ is from that of $y_iS_i\cap I_0S_i$. In Lewis's approach, the ideal $y_iS_i\cap I_0S_i$ plays an important role in establishing the equivalence between the finiteness of $H_I^{d+1}(J)$ and that of $H_I^d(R/J)$. The difference between $IS_i$ and $y_iS_i\cap I_0S_i$, namely the $(x_1,\ldots, x_{e-1})S_i$ part, can be ignored for the purpose of computing the local cohomology modules relevant to the present discussion:

\begin{lemma}
Let $A$ be a Noetherian ring, and $I_0$ an ideal of $A$. An ideal $I$ of $R$ is assumed to be of the form  
\[\sqrt{I}=\sqrt{x_eA\cap I_0+(x_1,\ldots, x_{e-1})A}
\]
for some $x_1,\ldots, x_e\in A$ such that $(x_ex_1, x_ex_2, \ldots, x_ex_{e-1})A\subset I_0$. Then, $H_I^i(M)\simeq H_{x_eA\cap I_0}^i(M)$ for any $A$-module $M$ and any $i>e$.
\end{lemma}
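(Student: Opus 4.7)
The plan is to apply the Mayer--Vietoris long exact sequence for local cohomology to the pair of ideals $I_1 := x_eA\cap I_0$ and $I_2 := (x_1,\ldots,x_{e-1})A$. By hypothesis $\sqrt{I}=\sqrt{I_1+I_2}$, so $H_I^i(M)=H_{I_1+I_2}^i(M)$ for every $i$, and the task reduces to analyzing the sequence
\[
\cdots\to H_{I_1\cap I_2}^{i-1}(M)\to H_{I_1+I_2}^i(M)\to H_{I_1}^i(M)\oplus H_{I_2}^i(M)\to H_{I_1\cap I_2}^i(M)\to\cdots
\]
and forcing the terms involving $I_2$ and $I_1\cap I_2$ to vanish when $i>e$.

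The bound $\ara(I_2)\le e-1$ is immediate, so by Proposition 2.2 we have $H_{I_2}^j(M)=0$ for all $j\ge e$. The key computation is the identification
\[
\sqrt{I_1\cap I_2}=\sqrt{(x_ex_1,\ldots,x_ex_{e-1})A},
\]
which will force $\ara(I_1\cap I_2)\le e-1$ as well. For $\supset$, each $x_ex_j$ lies in $x_eA$ and, by the hypothesis $(x_ex_1,\ldots,x_ex_{e-1})A\subset I_0$, also in $I_0$; hence $x_ex_j\in I_1$, and since $x_ex_j\in I_2$ trivially, $(x_ex_1,\ldots,x_ex_{e-1})A\subset I_1\cap I_2$. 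For $\subset$ I would argue via primes: any prime $P$ containing every $x_ex_j$ either contains $x_e$, in which case $P\supset x_eA\supset I_1$, or avoids $x_e$ and must therefore contain each $x_j$ for $j<e$, so that $P\supset I_2$; in either case $P\supset I_1\cap I_2$. Thus $H_{I_1\cap I_2}^j(M)=0$ for all $j\ge e$ by Proposition 2.2.

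For $i>e$ we then have $i-1\ge e$, so the three terms $H_{I_1\cap I_2}^{i-1}(M)$, $H_{I_1\cap I_2}^i(M)$, and $H_{I_2}^i(M)$ all vanish, and the Mayer--Vietoris sequence collapses to
\[
0\to H_{I_1+I_2}^i(M)\to H_{I_1}^i(M)\to 0,
\]
yielding $H_I^i(M)\simeq H_{I_1}^i(M)=H_{x_eA\cap I_0}^i(M)$, as required.

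The main obstacle is the identification $\sqrt{I_1\cap I_2}=\sqrt{(x_ex_1,\ldots,x_ex_{e-1})A}$; the hypothesis $(x_ex_1,\ldots,x_ex_{e-1})A\subset I_0$ is exactly what places each $x_ex_j$ into $I_1$ and thereby pins $I_1\cap I_2$ down, up to radical, to an ideal generated by only $e-1$ elements. Without this hypothesis the arithmetic rank of $I_1\cap I_2$ could exceed $e-1$ and the vanishing argument would fail.
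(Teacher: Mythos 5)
Your proposal is correct and follows essentially the same route as the paper: the same Mayer--Vietoris decomposition into $x_eA\cap I_0$ and $(x_1,\ldots,x_{e-1})A$, the same identification $\sqrt{I_1\cap I_2}=\sqrt{(x_ex_1,\ldots,x_ex_{e-1})A}$ using the hypothesis $(x_ex_1,\ldots,x_ex_{e-1})A\subset I_0$, and the same arithmetic-rank vanishing to collapse the sequence. The only cosmetic difference is that you verify the radical equality by a prime-ideal argument where the paper uses a chain of radical manipulations; both are valid.
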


\begin{proof}
By the assumption and with the notation $\underline{x}=(x_1,\ldots, x_{e-1})$, we have $H_I^i(M)\simeq H_{x_eA\cap I_0+\underline{x}A}^i(M)$. It follows from the Mayor-Vietoris sequence that
\[
H_{x_eA\cap I_0\cap\underline{x}A}^{i-1}(M)\to H_I^i(M)\to H_{x_eA\cap I_0}^i(M)\oplus H_{\underline{x}A}^i(M)\to H_{x_eA\cap I_0\cap\underline{x}A}^i(M)
\]
is exact. By the assumption that $(x_ex_1,\ldots, x_ex_{e-1})A\subset I_0$, 
\begin{equation*}\begin{split}
\sqrt{x_eA\cap I_0\cap\underline{x}A}&=\sqrt{I_0\cap\sqrt{x_eA\cap\underline{x}A}}=\sqrt{I_0\cap\sqrt{(x_ex_1,\ldots, x_ex_{e-1})A}}\\&=\sqrt{I_0\cap (x_ex_1,\ldots, x_ex_{e-1})A}=\sqrt{(x_ex_1,\ldots, x_ex_{e-1})A}
\end{split}\end{equation*}
The above exact sequence is rewritten in the following form:
\[
H_{(x_e\underline{x})}^{i-1}(M)\to H_I^i(M)\to H_{x_eA\cap I_0}^i(M)\oplus H_{\underline{x}A}^i(M)\to H_{(x_e\underline{x})}^i(M),
\]
where $(x_e\underline{x})$ denotes the ideal $(x_ex_1,\ldots, x_ex_{e-1})A$. If $i>e$, by Proposition 2.2,  this exact sequence is 
\[0\to H_I^i(M)\to  H_{x_eA\cap I_0}^i(M)\oplus 0\to 0.\]
 Hence, $H_I^i(M)\simeq H_{x_eA\cap I_0}^i(M)$ if $i>e$.
\end{proof}

At this point, we generalize Case 2 of Theorem 3.3.

\begin{theorem}
Let $R$ be a Noetherian ring, $J$ an ideal generated by an $R$-sequence of length at least $2$. Set $d= \depth_I(R)$, and let $I$ be an ideal containing $J$ such that $e=\depth_I(R/J)\geq2$. Suppose that $\Ass H_I^{d+1}(R)$ is finite, and that there is an $R/J$-sequence $x_1,\ldots, x_{e-1}\in I$ such that $R/(x_1,\ldots, x_{e-1}, J)$ is a locally almost factorial normal ring. Then, 
\[\Ass H_I^{d+1}(J)\text{ is finite}\Leftrightarrow\Ass H_I^{d}(R/J)\text{ is finite.}\]
\end{theorem}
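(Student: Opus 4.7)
The plan is to mirror Lewis's argument for Case 2 of Theorem 3.3, but now allowing an $R/J$-sequence of arbitrary length $e-1 \geq 1$ in $I$; Proposition 4.1 supplies the necessary radical decomposition on an open cover of $V(I)$, and Lemma 4.2 absorbs the extra sequence elements into a single cleaner ideal. I would first take the long exact sequence attached to $0 \to J \to R \to S \to 0$, where $S = R/J$, isolating the fragment
$$H_I^d(R) \xrightarrow{\psi} H_I^d(S) \to H_I^{d+1}(J) \to H_I^{d+1}(R).$$
Granting $\psi = 0$, the sequence $0 \to H_I^d(S) \to H_I^{d+1}(J) \to H_I^{d+1}(R)$ is exact, so $\Ass H_I^d(S) \subset \Ass H_I^{d+1}(J) \subset \Ass H_I^d(S) \cup \Ass H_I^{d+1}(R)$, and the equivalence follows from the assumed finiteness of $\Ass H_I^{d+1}(R)$ (needed only for the $\Leftarrow$ direction).

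To prove $\psi = 0$, apply Proposition 4.1 to obtain an ideal $I_0 \supset (x_1,\ldots,x_{e-1}, J)$ with $\depth_{I_0}(S) > e$, together with a finite open cover $\{V(I) \cap D(f_i)\}_{i=1}^{t}$ of $V(I)$ satisfying
$$\sqrt{IS_{f_i}} = \sqrt{y_i S_{f_i} \cap I_0 S_{f_i} + (x_1,\ldots,x_{e-1})S_{f_i}}.$$
Because the associated primes of both source and target of $\psi$ lie in $V(I)$, it suffices to check $\psi_{f_i} = 0$ on each $D(f_i)$. The inequality $d = e + \depth_J(R) \geq e+2$ together with $y_i x_j \in I_0$ for $j \leq e-1$ permits applying Lemma 4.2 inside $S_{f_i}$ (with $x_e = y_i$) to obtain $H_I^d(S_{f_i}) \simeq H_{y_i S_{f_i} \cap I_0 S_{f_i}}^d(S_{f_i})$, and Proposition 3.6 (applicable since $d \geq 2$) then refines this to $H_{I_0}^d(S_{f_i})_{y_i}$, which is naturally an $(S_{f_i})_{y_i}$-module. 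By Lemma 3.7, $\psi_{f_i}$ factors through
$$(S_{f_i})_{y_i} \otimes_{R_{f_i}} H_I^d(R_{f_i}) \simeq (S_{f_i})_{y_i} \otimes_{(R_{f_i})_{y_i}} H_I^d\bigl((R_{f_i})_{y_i}\bigr),$$
using Proposition 2.4 for the flat base change by $R_{f_i} \to (R_{f_i})_{y_i}$. This reduces the task to proving $H_I^d((R_{f_i})_{y_i}) = 0$.

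For this vanishing I would first establish $\sqrt{I(R_{f_i})_{y_i}} = \sqrt{I_0 (R_{f_i})_{y_i}}$: every prime of $V(IR_{f_i})$ contains $JR_{f_i}$ (since $I \supset J$), and among such primes not containing $y_i$, containing $y_i S_{f_i} \cap I_0 S_{f_i}$ forces containing $I_0 S_{f_i}$, while $(x_1,\ldots,x_{e-1})\subset I_0$ is automatic; hence after inverting $y_i$, Proposition 4.1's decomposition collapses to the $I_0$-piece. Therefore $H_I^d((R_{f_i})_{y_i}) \simeq H_{I_0}^d((R_{f_i})_{y_i})$, and this vanishes by Theorem 2.7 because
$$\depth_{I_0}\bigl((R_{f_i})_{y_i}\bigr) \geq \depth_{I_0}(R) = \depth_J(R) + \depth_{I_0}(S) > \depth_J(R) + e = d,$$
where the middle equality is valid since $J$ is an $R$-sequence contained in $I_0$.

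The main obstacle is the third step, the radical identity $\sqrt{I(R_{f_i})_{y_i}} = \sqrt{I_0(R_{f_i})_{y_i}}$, since Proposition 4.1 phrases its decomposition in $S_{f_i}$ rather than in $R_{f_i}$, so transferring it back to $R_{f_i}$ and inverting $y_i$ requires a careful prime-by-prime inspection. The saving grace is that once $y_i$ is inverted the component $(y_i, x_1, \ldots, x_{e-1}, J)$ becomes the unit ideal, so Proposition 4.1's decomposition genuinely reduces to $I_0$, and Theorem 2.7 finishes the argument via the depth inequality lifted from $\depth_{I_0}(S) > e$.
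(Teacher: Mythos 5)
Your proposal is correct and follows essentially the same route as the paper: Proposition 4.1 for the radical decomposition on the cover of $V(I)$, Lemma 4.2 and Proposition 3.6 to give $H_I^d(S_{f_i})$ an $(S_{f_i})_{y_i}$-module structure, Lemma 3.7 to reduce the vanishing of $\psi$ to $H_I^d((R_{f_i})_{y_i})=0$, and the collapse of the decomposition to $\sqrt{I_0(R_{f_i})_{y_i}}$ plus the depth inequality to finish. The only cosmetic difference is that you patch the local vanishings $\psi_{f_i}=0$ into a global $\psi=0$ and then use the global long exact sequence, whereas the paper proves the finiteness equivalence on each $D(f_i)$ separately; both are valid and rest on the same computations.
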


\begin{proof}
By Proposition 4.1 and, with the notation $\underline{x}=(x_1,\ldots, x_{e-1})$ and $S=R/J$, there is an ideal $I_0\supset (x_1,\ldots, x_{e-1}, J)$ of $R$ such that 
\[\depth_{I_0}(S)>e,\]
and a finite open covering $\lbrace V(I)\cap D(f_1),\ldots,V(I)\cap D(f_t)\mid f_1,\ldots, f_t\in R\rbrace$ of $V(I)$ such that for each $i=1,\ldots, t$
\[
\sqrt{IS_{f_i}}=\sqrt{(\underline{x}, y_i)S_{f_i}}\cap I_0S_{f_i}=\sqrt{y_iS_{f_i}\cap I_0S_{f_i}+\underline{x}S_{f_i}}
\]
for some $y_i\in R$. By Lemma 4.2,
\[H_I^j({S_{f_i}})\simeq H_{y_iR\cap I_0}^j(S_{f_i})\quad\text{for all $j>e$}.\]
Since both $\Ass H_I^d(S)$ and $\Ass H_I^{d+1}(J)$ are contained in $V(I)$, we should prove that for $i=1, \ldots, t$,
\[
\Ass H_I^{d}(S_{f_i})\text{ is finite}\Leftrightarrow\Ass H_I^{d+1}(JR_{f_i})\text{ is finite}.
\]
By Proposition 3.6, $H_I^j({S_{f_i}})\simeq H_{I_0}^j(S_{f_i})_{y_i}$ for all $j>e$ $(\geq2)$. $H_I^d(S_{f_i})$ has a $(S_{f_i})_{y_i}$-module structure, since $d=e+\depth_{J}(R)\geq e+2$. As we have seen in the proof of Theorem 3.3, it suffices to show that $H_I^d((R_{f_i})_{y_i})=0$ and we can also conclude 
\[
\sqrt{I(R_{f_i})_{y_i}}=\sqrt{(\underline{x}, y_i, J)(R_{f_i})_{y_i}\cap I_0(R_{f_i})_{y_i}}=\sqrt{I_0(R_{f_i})_{y_i}}.
\]
It follows from this formula that $H_I^d((R_{f_i})_{y_i})\simeq H_{I_0}^d((R_{f_i})_{y_i})=0$, since
\[
\depth_{I_0}((R_{f_i})_{y_i})\geq\depth_{I_0}(R)=\depth_{I_0}(S)+\depth_J(R)>d.
\] 
\end{proof}

Constructing an explicit example in which the quotient ring is locally almost factorial appears to be quite subtle. Indeed, even when $J$ is generated by an $R$-sequence, it seems difficult in general to determine whether the quotient $R/J$ is normal or factorial. At present, the main situation where the technical assumptions of Theorem 4.3 can be verified is the case of a regular local ring, as treated in Section 5.

\begin{corollary}
Let $R$ be a Noetherian ring, $J$ an ideal generated by an $R$-sequence of length at least $2$. Set $d=\depth_I(R)$, and let $I$ be an ideal containing $J$ such that $e=\depth_I(R/J)\geq2$. Suppose that $\Ass H_I^{d+1}(R)$ are finite, and that there is an $R/J$-sequence $x_1,\ldots, x_{e-1}\in I$ such that $R/(x_1,\ldots, x_{e-1}, J)$ is a locally almost factorial normal ring. Then, for all $i\leq d+1$,
\[\Ass H_I^{i}(J)\text{ is finite}\Leftrightarrow\Ass H_I^{i-1}(R/J)\text{ is finite.}\]
\end{corollary}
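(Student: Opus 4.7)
The plan is to combine Theorem 4.3, which settles the top-degree case $i=d+1$, with the long exact sequence of local cohomology attached to the short exact sequence $0\to J\to R\to R/J\to 0$, namely
\[\cdots\to H_I^{i-1}(R)\to H_I^{i-1}(R/J)\to H_I^i(J)\to H_I^i(R)\to H_I^i(R/J)\to\cdots,\]
together with the depth vanishing $H_I^j(R)=0$ for $j<d=\depth_I(R)$ provided by Theorem 2.7. I would divide the argument into three cases according to $i$: the top case $i=d+1$, the boundary case $i=d$, and the range $1\le i\le d-1$.

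For the top case $i=d+1$, the conclusion is exactly Theorem 4.3; all of its hypotheses, namely the finiteness of $\Ass H_I^{d+1}(R)$ and the existence of the $R/J$-sequence $x_1,\ldots,x_{e-1}$ with $R/(x_1,\ldots,x_{e-1},J)$ locally almost factorial normal, are supplied by the corollary's hypotheses.

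For the boundary case $i=d$, the vanishing $H_I^{d-1}(R)=0$ truncates the long exact sequence to
\[0\to H_I^{d-1}(R/J)\to H_I^d(J)\to H_I^d(R),\]
which yields the sandwich $\Ass H_I^{d-1}(R/J)\subseteq\Ass H_I^d(J)\subseteq\Ass H_I^{d-1}(R/J)\cup\Ass H_I^d(R)$. Since $\Ass H_I^d(R)$ is finite by hypothesis, the finiteness of $\Ass H_I^d(J)$ is equivalent to that of $\Ass H_I^{d-1}(R/J)$.

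For $1\le i\le d-1$, both $H_I^{i-1}(R)$ and $H_I^i(R)$ vanish, so the long exact sequence yields an isomorphism $H_I^i(J)\simeq H_I^{i-1}(R/J)$ and the two finiteness statements coincide trivially. The only substantive ingredient is Theorem 4.3; the remaining cases reduce to standard associated-prime sandwiches around vanishing terms, so I expect no real obstacle, and in particular the almost factorial hypothesis enters the argument only through the invocation of Theorem 4.3.
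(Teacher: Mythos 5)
Your proposal is correct and follows essentially the same route as the paper: Theorem 4.3 for $i=d+1$, and for $i\le d$ the truncated long exact sequence $0\to H_I^{i-1}(R/J)\to H_I^i(J)\to H_I^i(R)$ coming from the vanishing $H_I^{i-1}(R)=0$, giving an isomorphism when $i<d$ and the associated-prime sandwich (using the finiteness of $\Ass H_I^d(R)$) when $i=d$.
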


\begin{proof}
For the case $i=d+1$, it follows by Theorem 4.3. If $i\leq d$,
\[0\to H_I^{i-1}(R/J)\to H_I^i(J)\to H_I^i(R)\]
is exact. If $i<d$, then $H_I^i(R)=0$, and the claim immediately follows. If $i=d$, the claim follows from the fact that $\Ass H_I^d(R)$ is finite. Indeed, since $d=\depth_I(R)$, $H_I^d(R)$ is the first nonzero local cohomology modules of $R$ with respect to $I$. Then $\Ass H_I^d(R)$ is finite by \cite[proposition 2.1]{BrFa}. 
\end{proof}

\section{Applications}

Let $(R, \mathfrak{m})$ be an $r$-dimensional regular local ring. $x_1,\ldots, x_r\in\mathfrak{m}$ is a \textit{regular system of parameters} if $\mathfrak{m}=(x_1,\ldots, x_r)$, or equivalently if the images of $x_1,\ldots, x_r$ in $\mathfrak{m}/\mathfrak{m}^2$ are linearly independent over $R/\mathfrak{m}$. For any ideal $I$ of $R$, $R/I$ is regular if and only if $I$ is generated by a part of a regular system of parameters. Let $I\supset J$ be ideals of $R$ with $\depth_J(R)\geq2$. When each of $I$ and $J$ is generated by a part of a regular system of parameters, we can take an $R/J$-sequence as stated in Theorem 4.3:

\begin{lemma}
Let $(R, \mathfrak{m}, k)$ be a regular local ring and $I\supset J$ ideals of $R$. If each of $I$ and $J$ is generated by a part of a regular system of parameters, there exists a subset of a regular system of parameters $x_1,\ldots,x_j,\ldots, x_d\in\mathfrak{m}$ such that $J=(x_1\ldots, x_j)$ and $I=(x_1,\ldots, x_j,\ldots, x_d)$. 
\end{lemma}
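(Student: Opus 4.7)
The plan is to work at the level of the cotangent space $V := \mathfrak{m}/\mathfrak{m}^2$, a $k$-vector space of dimension $r = \dim R$. By hypothesis $R/J$ and $R/I$ are regular, which is the standard characterization of ideals generated by parts of a regular system of parameters, so the images $\bar{J}, \bar{I} \subset V$ have $k$-dimensions equal to the minimal numbers of generators $j$ of $J$ and $d$ of $I$ respectively. The containment $J \subset I$ gives $\bar{J} \subset \bar{I}$.

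Then I would build the required regular system of parameters by successive basis extension. Choose a basis $\bar{x}_1, \ldots, \bar{x}_j$ of $\bar{J}$, extend it to a basis $\bar{x}_1, \ldots, \bar{x}_d$ of $\bar{I}$, and then to a basis $\bar{x}_1, \ldots, \bar{x}_r$ of $V$. Lift each $\bar{x}_i$ with $i \leq j$ to some $x_i \in J$, each $\bar{x}_i$ with $j < i \leq d$ to some $x_i \in I$, and the remaining ones to $x_i \in \mathfrak{m}$. Since the images of $x_1, \ldots, x_r$ in $V$ form a basis, Nakayama's lemma yields $\mathfrak{m} = (x_1, \ldots, x_r)$, so $x_1, \ldots, x_r$ is a regular system of parameters of $R$.

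The final step is to verify $(x_1, \ldots, x_j) = J$ and $(x_1, \ldots, x_d) = I$. Both $(x_1, \ldots, x_j)$ and $J$ are prime ideals of $R$, since their quotients are regular local rings and hence domains, and each has height $j$, equal to the codimension of the corresponding regular quotient in $R$. From $(x_1, \ldots, x_j) \subset J$ and the fact that in any Noetherian ring a strict inclusion of primes strictly raises the height, the two ideals must coincide. The same argument with $d$ in place of $j$ gives $(x_1, \ldots, x_d) = I$. The only conceptual step beyond linear algebra in $V$ and Nakayama is this height comparison of primes, and there is really no hard obstacle here; the lemma is essentially a bookkeeping exercise about the cotangent space.
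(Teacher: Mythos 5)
Your proposal is correct. It shares with the paper the basic skeleton — pass to the cotangent space $\mathfrak{m}/\mathfrak{m}^2$, build a compatible chain of linearly independent elements, and invoke Nakayama — but the two arguments close the loop differently. The paper chooses $y_1,\ldots,y_n\in I$ whose images generate $I/J$ as a part of a regular system of parameters of $R/J$, so that $I=J+(y_1,\ldots,y_n)=(x_1,\ldots,x_j,y_1,\ldots,y_n)$ holds by construction; the only thing left to verify is that the images of $x_1,\ldots,x_j,y_1,\ldots,y_n$ are linearly independent in $\mathfrak{m}/\mathfrak{m}^2$, which it does by a direct computation using the isomorphism $\mathfrak{m}/(\mathfrak{m}^2+J)\simeq(\mathfrak{m}/J)/(\mathfrak{m}/J)^2$. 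You instead lift arbitrary basis elements of the subspaces $\bar{J}\subset\bar{I}\subset\mathfrak{m}/\mathfrak{m}^2$, which only gives the inclusions $(x_1,\ldots,x_j)\subset J$ and $(x_1,\ldots,x_d)\subset I$ a priori, and you recover equality from the fact that both sides are primes of the same height ($\hight J=j=\dim_k\bar{J}$ and similarly for $I$, since the quotients are regular), together with the fact that a strict inclusion of primes strictly increases height. Both routes are sound; yours trades the paper's explicit linear-independence computation modulo $J$ for a (standard) appeal to primality and height of ideals generated by parts of a regular system of parameters, while the paper's version stays entirely at the level of generators and needs no dimension-theoretic input.
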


\begin{proof}
Since $(R/J)/(I/J)\simeq R/I$ is regular, $I/J$ is generated by a part of a regular system of parameters of $R/J$. We can take $y_1,\ldots, y_n\in I$ such that $I/J=(y_1,\ldots, y_n)R/J$, and the images of $y_1,\ldots, y_n$ in $(\mathfrak{m}/J)/(\mathfrak{m}/J)^2$ are linearly independent over $k$. Since $\mathfrak{m}/(\mathfrak{m}^2+J)\simeq(\mathfrak{m}/J)/(\mathfrak{m}/J)^2$ as a $k$-module, the images of $y_1,\ldots, y_n$ in $\mathfrak{m}/(\mathfrak{m}^2+J)$ are also linearly independent over $k$. Let $x_1,\ldots, x_j\in\mathfrak{m}$ be a part of a regular system of parameters which generates $J$. Then, $I=(x_1,\ldots, x_j, y_1,\ldots, y_n)$. It suffices to show that the images of $x_1,\ldots, x_j, y_1,\ldots, y_n$ in $\mathfrak{m}/\mathfrak{m}^2$ are linearly independent over $k$. Assume that $a_1,\ldots, a_j, b_1,\ldots, b_n\in R$, and
\[a_1x_1+\cdots+a_jx_j+b_1y_1+\cdots+b_ny_n\in\mathfrak{m}^2.\]
It follows from the choice of $x_1,\ldots,x_j$ that $a_1x_1+\cdots+a_jx_j\in J$, and $b_1y_1+\cdots+b_ny_n\in\mathfrak{m}^2+J$. Thus, $b_1,\ldots, b_n\in\mathfrak{m}$, since the images of $y_1,\ldots, y_n$ in $\mathfrak{m}/(\mathfrak{m}^2+J)$ are linearly independent over $R/\mathfrak{m}$. Then, $b_1y_1+\cdots b_ny_n\in\mathfrak{m}^2$, and $a_1x_1+\cdots+a_jx_j\in\mathfrak{m}^2$. Hence, $a_1,\ldots, a_j\in\mathfrak{m}$, since the images of $x_1,\ldots, x_j$ in $\mathfrak{m}/\mathfrak{m}^2$ are linearly independent over $R/\mathfrak{m}$.
\end{proof}

\begin{theorem}
Let $(R, \mathfrak{m})$ be an LC-finite regular local ring and $I\supset J$ ideals of $R$ with $\depth_J(R)\geq2$. Suppose that each of $I$ and $J$ is generated by a part of a regular system of parameters. Then, for any $i\geq1$,
\[
\Ass H_I^i(J)\text{ is finite}\Leftrightarrow\Ass H_I^{i-1}(R/J)\text{ is finite.}
\]
\end{theorem}

\begin{proof}
It is true for $i\leq2$, see Section 1. Fix $i\geq3$. As we have seen in Section 3, we may assume $\depth_I(R)=i-1\geq2$. Since $R/J$ is regular, for the cases $\depth_I(R/J)=0, 1$, the claim holds by Theorem 3.3. Fix $e=\depth_I(R/J)\geq2$. Then, there are elements $x_1,\ldots, x_{e-1}\in I$ such that $R/(x_1,\ldots, x_{e-1}, J)$ is regular. The claim also holds for the case $e\geq2$, by Theorem 4.3.
\end{proof} 

\begin{example}
Let $R=\mathbb{Z}_{p\mathbb{Z}}[[x_1, \ldots, x_n]]$, a formal power series ring over a discrete valuation ring $\mathbb{Z}_{p\mathbb{Z}}$. $R$ is a regular local ring with its maximal ideal $\mathfrak{m}=(p, x_1, \ldots, x_n)$ and its residue field $\mathbb{Z}/p\mathbb{Z}$. A ring like $R$ is called an unramified ring of unequal characteristic (see below for the precise definition). 

$R$ is known to be LC-finite (see \cite{Lyu3} or \cite[Corollary 4.2]{BBLSZ}). Let $I\supset J$ be ideals, each of which is generated by a part of a regular system of parameters. Assume $p\in J$. Then $R/J$ is a regular local ring of characteristic $p$, and it is LC-finite. By Theorem 5.2, $\Ass H_I^i(J)$ is finite for any $i\geq0$. Proposition 5.4 is a generalization of this example.
\end{example}

Let $(R, \mathfrak{m})$ be a regular local ring. If $\ch R=\ch R/\mathfrak{m}$, $R$ is said to be of \textit{equal characteristic}. If $R$ is not of equal characteristic, either
\[(\ch R, \ch R/\mathfrak{m})=(0, p),\]
or
\[(\ch R, \ch R/\mathfrak{m})=(p^n, p)\quad\text{for some $n>1$}.\]
In this case, $R$ is said to be of \textit{unequal characteristic}. Let $(R, \mathfrak{m})$ be a regular local ring of unequal characteristic, and $\ch R/\mathfrak{m}=p$. $R$ is called \textit{unramified} if $p\notin\mathfrak{m}^2$. A regular local ring of equal characteristic is conventionally referred to as unramified.

\begin{proposition}
Let $(R, \mathfrak{m})$ be an unramified regular local ring of unequal characteristic. Suppose that each of $I$ and $J$ is generated by a part of a regular system of parameters, and $p:=\ch R/\mathfrak{m}\in J$. Then, $\Ass H_I^i(J)$ is finite for any $i\geq0$.
\end{proposition}

\begin{proof}
Since $R$ is LC-finite \cite[Theorem 1]{Lyu3} and $R/J$ is a regular local ring of characteristic $p$, the claim follows by Theorem 5.2. 
\end{proof}

Next, we apply the theorem to deduce the finiteness of $\Ass H_I^i(J)$, based on the finiteness of $\Ass H_I^{i-1}(R/J)$, which is guaranteed by Marley's result\cite{Mar}:

\begin{proposition}
Let $(R, \mathfrak{m})$ be an LC-finite regular local ring of $\dim R\leq6$, and $I\supset J$ ideals of $R$. Suppose that each of $I$ and $J$ is generated by a part of a regular system of parameters. Then, $\Ass H_I^i(J)$ is finite for any $i\geq0$.
\end{proposition}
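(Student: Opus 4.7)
The plan is to reduce to the case $\depth_J(R)\geq 2$, apply Theorem 5.2 to pass from $H_I^i(J)$ to $H_I^{i-1}(R/J)$, and then invoke Marley's small-dimension finiteness result for local cohomology of finite modules, applied to the regular local quotient $R/J$. First I would clear the trivial cases. For $i=0$, $H_I^0(J)$ is a submodule of the finite $R$-module $J$, hence finite, so $\Ass H_I^0(J)$ is finite. If $\depth_J(R)=0$ then $J=0$ and there is nothing to show. If $\depth_J(R)=1$ then $J$ is generated by a single element of a regular system of parameters, in particular by a non-zerodivisor, so $J\simeq R$ as $R$-modules and $\Ass H_I^i(J)=\Ass H_I^i(R)$ is finite by the LC-finiteness of $R$.

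For the remaining range $i\geq 1$ and $\depth_J(R)\geq 2$, Theorem 5.2 applies and gives the equivalence $\Ass H_I^i(J)$ is finite $\Leftrightarrow$ $\Ass H_I^{i-1}(R/J)$ is finite. Thus it suffices to prove that $\Ass H_I^j(R/J)$ is finite for every $j\geq 0$. Here I would use that, because $J$ is generated by a subset of a regular system of parameters of $R$, the quotient $R/J$ is again a regular local ring, of dimension
\[\dim R/J=\dim R-\depth_J(R)\leq 6-2=4.\]

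Next, I would transfer the computation to $R/J$ via Proposition 2.5, writing $H_I^j(R/J)=H_{I(R/J)}^j(R/J)$ and identifying $\Ass_R H_I^j(R/J)$ with $\Ass_{R/J} H_{I(R/J)}^j(R/J)$ using the basic change-of-ring remarks from Section 2. Since $R/J$ is a regular local ring of dimension at most $4$ (and in particular a UFD), Marley's theorem on finiteness of associated primes of local cohomology modules of finite modules over rings of small dimension applies to the finite $R/J$-module $R/J$ and yields finiteness of $\Ass_{R/J} H_{I(R/J)}^j(R/J)$ for all $j$. Combined with the equivalence furnished by Theorem 5.2, this completes the argument.

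The main obstacle is not structural but bibliographic: one needs to apply Marley's theorem in the precise form that covers regular local rings of dimension up to $4$. The bound $\dim R\leq 6$ in the statement is calibrated exactly so that $\dim R/J\leq 4$, matching the range where Marley's finiteness results are known, and the regularity (hence UFD property) of $R/J$ removes any auxiliary hypotheses that Marley's theorem might otherwise impose. Once the correct Marley reference is pinned down, the whole proof is a routine concatenation of Theorem 5.2 with his result.
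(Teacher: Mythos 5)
Your overall strategy coincides with the paper's: dispose of $i=0$ and of $\depth_J(R)\le 1$ directly, then for $\depth_J(R)\ge 2$ use Theorem 5.2 to convert the problem into the finiteness of $\Ass H_I^{j}(R/J)$ for all $j\ge 0$, and settle that by Marley's small-dimension results applied to $R/J$, a regular local ring of dimension $\dim R-\hight J\le 4$. Up to the last step everything you write matches the paper's proof.

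The gap is in that last step, and it is not merely ``bibliographic.'' Marley's unconditional finiteness theorem (his Corollary 2.6, the one the paper uses) covers only modules of dimension at most $3$; there is no unconditional statement in \cite{Mar} for dimension $4$. The paper therefore treats the boundary case $\dim R/J=4$ separately via Marley's Proposition 2.7, whose additional hypothesis is a condition involving the ideal $I$ --- the paper verifies it from $\hight I\ge\depth_J(R)\ge 2$ --- and not a condition that regularity or factoriality of $R/J$ would discharge. Your assertion that ``the regularity (hence UFD property) of $R/J$ removes any auxiliary hypotheses that Marley's theorem might otherwise impose'' is unsubstantiated and does not reflect what his results actually require: the dimension-$4$ statement genuinely needs an assumption on $I$, just as the dimension-$5$ statement used in Proposition 5.4 genuinely needs unramifiedness. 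Nor can you fall back on LC-finiteness of the regular local ring $R/J$ itself: in mixed characteristic $R/J$ may be ramified, where no unconditional LC-finiteness is available, which is exactly why the argument is routed through Marley's module-theoretic results with their hypotheses checked. To close the proof you must split into $\dim R/J\le 3$ (Marley's Corollary 2.6) and $\dim R/J=4$ (Marley's Proposition 2.7, verifying its hypothesis via $\hight I\ge\hight J\ge 2$).
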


\begin{proof}
$H_I^0(J)$ is finite over $R$, and it has only a finite number of associated primes. If $\depth_J(R)=1$, for any $i\geq0$, $\Ass H_I^i(J)$ is finite by the LC-finiteness of $R$, which is isomorphic to $J$ as an $R$-module. We may assume $\depth_J(R)\geq2$, and $i>0$. Since $R$ is Cohen-Macaulay, $\hight J=\depth_J(R)\geq2$, and $\dim R/J=\dim R-\hight J\leq4$. If $\dim R/J\leq3$, $\Ass H_I^{i-1}(R/J)$ is finite for all $i>0$ by \cite[Corollary 2.6]{Mar}. If $\dim R/J=4$, since $\hight I\geq\depth_J(R)\geq2$, $\Ass H_I^{i-1}(R/J)$ is finite for all $i>0$ by \cite[Proposition 2.7]{Mar}. Hence, $\Ass H_I^i(J)$ is finite for any $i>0$, by Theorem 5.2.
\end{proof}

\begin{proposition}
Let $(R, \mathfrak{m})$ be an LC-finite regular local ring of $\dim R\leq7$, and $\ch R/\mathfrak{m}=p$. Let $I\supset J$ be ideals of $R$. Suppose that each of $I$ and $J$ is generated by a part of a regular system of parameters. Then, $\Ass H_I^i(J)$ is finite for any $i\geq0$, if either $p\in J$ or $p\notin \mathfrak{m}^2+J$.
\end{proposition}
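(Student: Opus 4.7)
The plan is to reduce the claim to Theorem 5.2, as in Proposition 5.3, and to cover the one case not handled by Marley's results, namely $\dim R/J = 5$, by using the hypothesis on $p$ to ensure that $R/J$ is an unramified regular local ring, hence LC-finite.

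First I would dispose of the trivial cases exactly as in the proof of Proposition 5.3: $H_I^0(J)$ is finite over $R$, and if $\depth_J(R)=1$ then $J\simeq R$ as an $R$-module and the claim follows from the LC-finiteness of $R$. So assume $i\geq 1$ and $\depth_J(R)\geq 2$. By Theorem 5.2, it then suffices to show $\Ass H_I^{i-1}(R/J)$ is finite. Since $R$ is Cohen--Macaulay, $\hight J=\depth_J(R)\geq 2$, and therefore $\dim R/J\leq 5$.

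For $\dim R/J\leq 3$, finiteness of $\Ass H_I^{i-1}(R/J)$ follows from \cite[Corollary 2.6]{Mar}, and for $\dim R/J=4$ the inequality $\hight I\geq\hight J\geq 2$ lets me apply \cite[Proposition 2.7]{Mar}. The new case is $\dim R/J=5$. Here $R/J$ is a regular local ring with maximal ideal $\mathfrak{m}/J$ and $(\mathfrak{m}/J)^2=(\mathfrak{m}^2+J)/J$; I would argue that both alternatives force $R/J$ to be unramified. If $p\in J$ then $p=0$ in $R/J$, so $R/J$ is a regular local ring of equal characteristic $p$, which is LC-finite by Huneke--Sharp. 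If $p\notin\mathfrak{m}^2+J$ then $p\neq 0$ in $R/J$ and its image lies in $\mathfrak{m}/J\setminus(\mathfrak{m}/J)^2$, so $R/J$ is an unramified regular local ring of unequal characteristic, which is also LC-finite (cf.\ the references to \cite{Mar} in the introduction). In either case, the LC-finiteness of $R/J$ yields the required finiteness of $\Ass H_I^{i-1}(R/J)=\Ass_{R/J}H_{I(R/J)}^{i-1}(R/J)$, and Theorem 5.2 closes the argument.

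The main obstacle is the unequal-characteristic branch of $\dim R/J=5$, which calls for LC-finiteness of unramified regular local rings of mixed characteristic; this goes beyond the theorems of Huneke--Sharp and Lyubeznik for rings containing a field quoted at the beginning of the paper, and I would invoke it from the additional examples of LC-finite rings in \cite{Mar}.
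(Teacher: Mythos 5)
Your proposal is correct and follows essentially the same route as the paper: reduce to Theorem 5.2, treat $\dim R/J\leq 4$ exactly as in Proposition 5.3, and use the hypothesis on $p$ to conclude that $R/J$ is an unramified regular local ring in the remaining case $\dim R/J=5$. The only divergence is the final citation: where you appeal to LC-finiteness of unramified regular local rings of mixed characteristic (and flag that as a potential obstacle), the paper closes the $\dim R/J=5$ case by invoking Marley's Theorem 2.10, which gives the finiteness of the associated primes of local cohomology of finite modules over five-dimensional unramified regular local rings, so the step you were worried about is covered by a reference already in the bibliography.
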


\begin{proof}
We may assume $i>0$, $\hight J\geq2$, and $\dim R/J=\dim R-\hight J\leq5$. If $\dim R/J\leq4$, we can prove the claim using the same method as in Proposition 5.3. If $p\in J$ holds, $R/J$ is of equal characteristic; If $p\notin\mathfrak{m}^2+J$ holds, $R/J$ is unramified; in conclusion, $R/J$ is unramified. If $\dim R/J=5$, $\Ass H_I^{i-1}(R/J)$ is finite for all $i>0$, by \cite[Theorem 2.10]{Mar}. The finiteness of $\Ass H_I^i(J)$ follows by Theorem 5.2.
\end{proof}

\begin{remark}
We should emphasize in these propositions that it is not required for a regular local ring $R$ to be of equal characteristic. Indeed, if $R$ is of equal characteristic, the finiteness of $\Ass H_I^i(J)$ follows by the Lyubeznik's results (see \cite{Lyu1} and \cite{Lyu2}). 

If $R$ is of equal characteristic $p>0$ and $J$ is generated by a part of a regular system of parameters, $\Ass H_I^i(J)$ is finite for any ideal $I$ of $R$ and any $i\geq0$ \cite[Theorem 5.2]{Lew}. 

If $R$ is of equal characteristic $0$, we show that $\Ass H_I^i(J)$ is finite for any pair of ideals $I$, $J$, and for any $i\geq0$. Let $\hat{R}$ be the $\mathfrak{m}$-adic completion. $\hat{R}$ is a Noetherian local ring, and flat over $R$. Then, $H_I^i(J\hat{R})\simeq H_I^i(J)\otimes_R\hat{R}$. Furthermore, by \cite[Theorem 23.2(ii)]{Mat},
\[
\Ass_{\hat{R}} H_I^i(J\hat{R})=\bigcup_{\mathfrak{p}\in\Ass_R H_I^i(J)}\Ass_{\hat{R}} (\hat{R}/\mathfrak{p}\hat{R}).
\]
For any $\mathfrak{p}\in\Ass_R H_I^i(J)$, $\Ass_{\hat{R}}(\hat{R}/\mathfrak{p}\hat{R})$ is not empty \cite[Theorem 6.1(i)]{Mat}. The finiteness of $\Ass_{\hat{R}} H_I^i(J\hat{R})$ implies the finiteness of $\Ass_R H_I^i(J)$. We show that $\Ass_{\hat{R}} H_I^i(J\hat{R})$ is finite, using the theory of ``holonomic $D$-modules'' established by Lyubeznik\cite{Lyu1}. By Cohen's structure theorem, $\hat{R}$ is a quotient of a formal power series ring $A$ in finitely many variables, over a field of characteristic $0$. $A$ is a holonomic $D$-module \cite[(2.2a)]{Lyu1}. In addition, both submodules and quotient modules of a holonomic module inherit the property of being holonomic \cite[(2.2c)]{Lyu1}, $\hat{R}$ and $J\hat{R}$ are holonomic. By \cite[(2.2d) and Theorem 2.4(c)]{Lyu1}, $\Ass_{\hat{R}} H_I^i(J\hat{R})$ is finite.
\end{remark}

\section*{Acknowledgments}
The author is grateful to Professor Yasunari Nagai for valuable discussions and clear guidance throughout this research. The author also would like to thank the anonymous referees and the editor for their careful reading and helpful comments.

\begin{bibdiv}
\begin{biblist}

\bib{BBLSZ}{article}{
   author={Bhatt, Bhargav},
   author={Blickle, Manuel},
   author={Lyubeznik, Gennady},
   author={Singh, Anurag K.},
   author={Zhang, Wenliang},
   title={Local cohomology modules of a smooth $\Bbb{Z}$-algebra have
   finitely many associated primes},
   journal={Invent. Math.},
   volume={197},
   date={2014},
   number={3},
   pages={509--519},
   issn={0020-9910},
   review={\MR{3251828}},
   doi={10.1007/s00222-013-0490-z},
}
\bib{BrFa}{article}{
   author={Brodmann, M. P.},
   author={Faghani, A. Lashgari},
   title={A finiteness result for associated primes of local cohomology
   modules},
   journal={Proc. Amer. Math. Soc.},
   volume={128},
   date={2000},
   number={10},
   pages={2851--2853},
   issn={0002-9939},
   review={\MR{1664309}},
   doi={10.1090/S0002-9939-00-05328-4},
}
\bib{DM}{article}{
   author={Dosea, Andr\'e},
   author={Miranda-Neto, Cleto B.},
   title={On Huneke's conjecture about associated primes of local cohomology
   modules},
   journal={J. Algebra},
   volume={669},
   date={2025},
   pages={143--158},
   issn={0021-8693},
   review={\MR{4862969}},
   doi={10.1016/j.jalgebra.2025.02.008},
}
\bib{Eis}{book}{
   author={Eisenbud, David},
   title={The geometry of syzygies},
   series={Graduate Texts in Mathematics},
   volume={229},
   note={A second course in commutative algebra and algebraic geometry},
   publisher={Springer-Verlag, New York},
   date={2005},
   pages={xvi+243},
   isbn={0-387-22215-4},
   review={\MR{2103875}},
}
\bib{Hun90}{article}{
   author={Huneke, Craig},
   title={Problems on local cohomology},
   conference={
      title={Free resolutions in commutative algebra and algebraic geometry},
      address={Sundance, UT},
      date={1990},
   },
   book={
      series={Res. Notes Math.},
      volume={2},
      publisher={Jones and Bartlett, Boston, MA},
   },
   isbn={0-86720-285-8},
   date={1992},
   pages={93--108},
   review={\MR{1165320}},
}
\bib{Hun}{article}{
   author={Huneke, Craig},
   title={Lectures on local cohomology},
   note={Appendix 1 by Amelia Taylor},
   conference={
      title={Interactions between homotopy theory and algebra},
   },
   book={
      series={Contemp. Math.},
      volume={436},
      publisher={Amer. Math. Soc., Providence, RI},
   },
   isbn={978-0-8218-3814-3},
   date={2007},
   pages={51--99},
   review={\MR{2355770}},
   doi={10.1090/conm/436/08404},
}
\bib{HuSh}{article}{
   author={Huneke, Craig L.},
   author={Sharp, Rodney Y.},
   title={Bass numbers of local cohomology modules},
   journal={Trans. Amer. Math. Soc.},
   volume={339},
   date={1993},
   number={2},
   pages={765--779},
   issn={0002-9947},
   review={\MR{1124167}},
   doi={10.2307/2154297},
}
\bib{Kat}{article}{
   author={Katzman, Mordechai},
   title={An example of an infinite set of associated primes of a local
   cohomology module},
   journal={J. Algebra},
   volume={252},
   date={2002},
   number={1},
   pages={161--166},
   issn={0021-8693},
   review={\MR{1922391}},
   doi={10.1016/S0021-8693(02)00032-7},
}
\bib{Lew}{article}{
   author={Lewis, Monica A.},
   title={The local cohomology of a parameter ideal with respect to an
   arbitrary ideal},
   journal={J. Algebra},
   volume={589},
   date={2022},
   pages={82--104},
   issn={0021-8693},
   review={\MR{4321612}},
   doi={10.1016/j.jalgebra.2021.09.014},
}
\bib{Lyu1}{article}{
   author={Lyubeznik, Gennady},
   title={Finiteness properties of local cohomology modules (an application
   of $D$-modules to commutative algebra)},
   journal={Invent. Math.},
   volume={113},
   date={1993},
   number={1},
   pages={41--55},
   issn={0020-9910},
   review={\MR{1223223}},
   doi={10.1007/BF01244301},
}
\bib{Lyu2}{article}{
   author={Lyubeznik, Gennady},
   title={$F$-modules: applications to local cohomology and $D$-modules in
   characteristic $p>0$},
   journal={J. Reine Angew. Math.},
   volume={491},
   date={1997},
   pages={65--130},
   issn={0075-4102},
   review={\MR{1476089}},
   doi={10.1515/crll.1997.491.65},
}
\bib{Lyu3}{article}{
   author={Lyubeznik, Gennady},
   title={Finiteness properties of local cohomology modules for regular
   local rings of mixed characteristic: the unramified case},
   note={Special issue in honor of Robin Hartshorne},
   journal={Comm. Algebra},
   volume={28},
   date={2000},
   number={12},
   pages={5867--5882},
   issn={0092-7872},
   review={\MR{1808608}},
   doi={10.1080/00927870008827193},
}
\bib{Mar}{article}{
   author={Marley, Thomas},
   title={The associated primes of local cohomology modules over rings of
   small dimension},
   journal={Manuscripta Math.},
   volume={104},
   date={2001},
   number={4},
   pages={519--525},
   issn={0025-2611},
   review={\MR{1836111}},
   doi={10.1007/s002290170024},
}
\bib{Mat}{book}{
   author={Matsumura, Hideyuki},
   title={Commutative ring theory},
   series={Cambridge Studies in Advanced Mathematics},
   volume={8},
   edition={2},
   note={Translated from the Japanese by M. Reid},
   publisher={Cambridge University Press, Cambridge},
   date={1989},
   pages={xiv+320},
   isbn={0-521-36764-6},
   review={\MR{1011461}},
}
\bib{SiSw}{article}{
   author={Singh, Anurag K.},
   author={Swanson, Irena},
   title={Associated primes of local cohomology modules and of Frobenius
   powers},
   journal={Int. Math. Res. Not.},
   date={2004},
   number={33},
   pages={1703--1733},
   issn={1073-7928},
   review={\MR{2058025}},
   doi={10.1155/S1073792804133424},
}
\end{biblist}
\end{bibdiv}

\end{document}